\numberwithin{equation}{section}
\theoremstyle{plain}
\newtheorem{theorem}[equation]{Theorem}
\newtheorem{lemma}[equation]{Lemma}
\newtheorem{proposition}[equation]{Proposition}
\newtheorem{corollary}[equation]{Corollary}
\theoremstyle{definition}
\newtheorem{definition}[equation]{Definition}
\newtheorem{example}[equation]{Example}
\newtheorem{remark}[equation]{Remark}
\setlist[enumerate]{label=(\arabic*), leftmargin=*}
\setlist[itemize]{label=$\vcenter{\hbox{\footnotesize$\bullet$}}$, leftmargin=*}
\newcommand{\mb}[1]{\mathbf{#1}}
\newcommand{\mm}[1]{\mathrm{#1}}
\newcommand{\ul}[1]{\underline{#1}}
\newcommand{\cat}[1]{
\StrLen{#1}[\mystrlen]
\ifnum\mystrlen=1 \mathscr{#1}
\else \mathrm{#1}
\fi}
\newcommand{\scat}[1]{\mb{#1}}
\newcommand{\Hom}[0]{\mm{Hom}}
\newcommand{\op}[0]{\mm{op}}
\newcommand{\sarahline}[1]{\todo[inline,color=green!40]{#1}}
\newcommand{\tristan}[1]{\todo{#1}}
\newcommand{\tristanline}[1]{\todo[inline]{#1}}
\title{Calabi--Yau structures for multiplicative preprojective algebras}
\author[]{
	Tristan Bozec\thanks{IMAG, Univ. Montpellier, CNRS, Montpellier, France \\
											\href{mailto:tristan.bozec@umontpellier.fr}{tristan.bozec@umontpellier.fr}}, 
	Damien Calaque\thanks{IMAG, Univ. Montpellier, CNRS, Montpellier, France \\
											\href{mailto:damien.calaque@umontpellier.fr}{damien.calaque@umontpellier.fr}}, 
	Sarah Scherotzke\thanks{Mathematical Institute, University of Luxembourg, Luxembourg \\
											\href{mailto:sarah.scherotzke@uni.lu}{sarah.scherotzke@uni.lu}}}
\date{}
\begin{document}

\maketitle

\begin{abstract}
In this paper we deal with Calabi--Yau structures associated with (differential graded versions of) deformed multiplicative preprojective algebras, of which we provide concrete algebraic descriptions. 
Along the way, we prove a general result that states the existence and uniqueness of negative cyclic lifts for non-degenerate relative Hochschild classes. 
\end{abstract}

\setcounter{tocdepth}{2}
\tableofcontents


\section{Introduction}

Given a quiver $Q$, that is an oriented graph, one may consider the preprojective algebra associated to $Q$ over some field $k$. It can be defined as a quotient of the path algebra $k\overline Q$ 
of the double quiver $\overline Q$, obtained by adjoining to each edge $e:i\to j$ between two vertices $i$ and $j$ a reverse edge $e^*:j\to i$. We quotient by a single relation\[
\mu=\sum_{e\in Q}[e,e^*],\]
a signed combination of $2$-cycles in $\overline Q$. Originally introduced by Gelfand--Ponomarev~\cite{GePo} (see also~\cite{Ring}) in a strictly algebraic context, the preprojective algebra turns out 
bearing a strong geometric significance. It may indeed be understood as the algebraic structure underlying the cotangent to the moduli of representations of $Q$ (see~\cite{BCS} for a fully precise statement). 
Its representations correspond to the $0$-fiber of the moment map associated to the linear group acting by conjugation at each vertex. This fact has been extensively used by Lusztig~\cite{Lusz}, and later 
Nakajima~\cite{Nak}, to geometrically realize quantum groups and their representations, in particular through the definition of lagrangian subvarieties of symplectic quiver moduli. 

Multiplicative variants of these preprojective algebras have been introduced by Crawley-Boevey and Shaw~\cite{CBS} in the course of their study of the Deligne--Simpson problem. It is defined by performing a 
quotient by
\[
\prod_{e\in Q}(1+ee^*)(1+e^*e)^{-1}
\]
of an appropriate localization of $k\overline Q$. These variants turn out to naturally appear in various areas such as character varieties~\cite{Boalch,CBS,ScTi,Yam}, local systems on Riemann surfaces and perverse sheaves on nodal curves~\cite{Beil,BK,BoalchAnnals,Yam}, 
or integrable systems~\cite{ChaFai} among others.
The geometric framework in which multiplicative quiver varieties seem to be better studied is the one of quasi-hamiltonian reduction and group-valued moment maps from~\cite{AMM}, as shown by Van den Bergh~\cite{VdB,VdB2}.

Multiplicative preprojective algebras fit both into the quasi-hamiltonian formalism and into its non-commutative analogue as developped by Van den Bergh in~\cite{VdB2}. 
In the first case, group-valued moment maps and the quasi-hamiltonian formalism have a nice interpretation within the framework of shifted symplectic geometry of~\cite{PTVV}, in terms of lagrangian morphisms and derived lagrangian 
intersections (see~\cite{CalTFT,Safronov}). 

Using the non-commutative analogue of quasi-hamiltonian formalism,  one obtains that multiplicative preprojective algebras come equipped with double quasi-Poisson structures~\cite{VdB}. Furthermore, Fern\'andez and Herscovich have 
recently proved in~\cite{FH} that double quasi-Poisson structures give rise to pre-Calabi--Yau structures in the sense of Iyudu--Kontsevich--Vlassopoulos~\cite{IKV}\footnote{We would like to warn the reader that 
pre-Calabi--Yau structures in \textit{loc.~cit.} are different from the ones considered e.g.~in~\cite{BCS,ToCY} (the latter being non-commutative pre-symplectic strctures, rather than non-commutative Poisson structures). }, 
extending a similar result from \textit{loc.~cit.} for double Poisson structures. 

In the same way as shifted Poisson structures in the sense of~\cite{CPTVV,Prid} arise on the source of morphisms equipped with a lagrangian structure~\cite{MS2} (actually, shifted Poisson structures are conjectured to be equivalent to lagrangian thickenings), 
it is expected that pre-Calabi--Yau structures in the sense of~\cite{IKV} often (if not always) arise on the target of Calabi--Yau morphisms in the sense of~\cite{BD1}. 

Hinging on these observations, our goal is to directly construct Calabi--Yau structures on appropriate algebraic objects, and get back the usual lagrangian morphisms associated 
with group-valued moment maps on moduli spaces. Namely, we do the following:\begin{enumerate}
\item We first study a Calabi--Yau structure on $k[x^{\pm1}]$, seen as the multiplicative analog of $k[x]$.  Using this Calabi-Yau structure, we obtain using ~\cite{BD1} a $1$-shifted symplectic structure on moduli of perfect complexes. We show  that restricting to the moduli of representations we recover the usual $1$-shifted symplectic structure on the adjoint quotient which is crucial in the derived symplectic interpretation of the quasi-hamiltonian formalism;
\item We give a $1$-Calabi-Yau structure on cospans which allow us to retrieve standard lagrangian correspondences when applying the moduli of objects functor $\mathbf{Perf}$. 
We recover in particular the lagrangian correspondence that was shown in~\cite{Safronov} to underlie the fusion product  from~\cite{AMM};
\item We give a relative $1$-Calabi--Yau structure on the algebraic counterpart of the group-valued moment map. This is done via a gluing procedure called fusion. 
\item Via pushouts of Calabi-Yau cospans, we obtain a $2$-Calabi-Yau structure on the differential graded multiplicative preprojective algebra, defined in Theorem~\ref{dgmult}. The zero truncation
of the  differential graded multiplicative preprojective algebra is the original multiplicative preprojective algebra. 
\end{enumerate}

\subsection*{Description of the paper}

In section~\ref{section2}, we provide a short recollection on Calabi--Yau structures, after~\cite{BD1}. We also show that, in the case of smooth dg-categories sitting in degree $0$, the required cyclic lift of the non-degenerate relative Hochschild class, in the definition of a 
Calabi--Yau structure on a morphism, automatically exists and is unique. This extends to the relative case a result of~\cite{TdV-VdB}, and is of independent interest, see Theorem~\ref{theorem-astuce}. 

Section~\ref{section: CY} uses this result to produce $1$-Calabi--Yau structures on $k[\mathbb G_m]=k[x^{\pm1}]$ and the cospan defined by $k[x^{\pm1}]\amalg k[y^{\pm1}]\rightarrow k\langle x^{\pm1},y^{\pm1}\rangle \leftarrow k[z^{\pm1}]$, denoted by $\mathcal F$ in this introduction only. 
In each case we define explicit Hochschild classes that we prove to be non-degenerate. Thanks to section~\ref{section2}, these admit a unique cyclic lift. We also study evaluation morphisms $k[x^{\pm1}]\to k$.

In section~\ref{section-preproj}, using the Calabi-Yau structures of the previous section, we show in Theorem~\ref{fuz} that the multiplicative moment map is $1$-Calabi-Yau. The quiver $A_2=\bullet\rightarrow\bullet$ serves as a building block.  Again, this structure is made 
explicit and proven to be non-degenerate ``by hand'', whereas its cyclic lift exists thanks to section~\ref{section2}. The cospan $\mathcal F$ studied earlier then serves in a gluing process (\textit{a.k.a.\! fusion}) to extend our result to arbitrary quivers. Pushouts along evaluation morphisms yield a $2$-Calabi-Yau structure on a dg-algebra whose $0$-truncation is the classical multiplicative preprojective algebra, \textit{c.f.}\ Theorem~\ref{dgmult}.

Finally, section~\ref{section-comparison} justifies our choices of Hochschild classes defining Calabi--Yau structures. We prove that when taking $\mathbf{Perf}$, we retrieve standard symplectic structures. Namely, the $1$-shifted symplectic structure on $\mathbf{Perf}_{k[x^{\pm1}]}$ matches the one on the derived loop stack $\mathcal L\mathbf{Perf}_k$. We also prove that the Calabi--Yau structure on $\mathcal F$ corresponds to a particular gluing of the boundaries of the  pair-of-pants. We conjecture that the structures we get on our dg-variants of multiplicative preprojective algebras yield standard quasi-hamiltonian structures on multiplicative quiver varieties.

\subsection*{Related works}
In~\cite{BK}, Bezrukavnikov and Kapranov prove that certain triangulated categories of microlocal complexes on nodal curves have a Calabi--Yau property, which roughly corresponds to the existence of an 
almost Calabi--Yau structure according to our terminology. However, it is not clear if it admits an actual Calabi--Yau (i.e.~cyclic) lift. In \textit{loc.~cit.} the authors mention a dg-version of the multiplicative 
preprojective algebra and expect that it is a Calabi--Yau dg-algebra. Our results show that this is indeed true. 
This expectation was motivated by the existence of an equivalence of abelian categories between microlocal sheaves on nodal curves with rational components on the one side, and modules over the multiplicative 
preprojective algebra on the other side. 
They could not conclude, because it is not known if a similar equivalence holds for the dg-version of the multiplicative preprojective algebra. 

A similar approach is considered by Shende and Takeda in their work~\cite{ST} on Calabi--Yau structures of topological Fukaya categories. It is possible that, following some suggestions from \cite[\S7.4 \& \S7.5]{ST}, 
one could potentially recover some of our existence results in a geometric way, as opposed to the explicit algebraic approach of the present paper. The obtained Calabi--Yau structures would then deserve to be compared 
with ours. The last section of our paper provides tools for such a comparison, that would then essentially rely on the fact that both approaches are compatible with some gluing/fusion process; though, the comparison for 
the $A_2$ building bloc remains to be dealt with. 

Yeung~\cite[\S5.5]{Yeung} also exhibits a Calabi--Yau structure on the multiplicative preprojective dg-algebra associated with a star-shaped quiver. It seems that his Calabi--Yau structure differs from ours, unless one goes 
through some adic completion (see Remark~\ref{remyeung}). 

Finally, Kaplan and Schedler~\cite{KaSc} show that the multiplicative preprojective (non dg) algebra is Calabi--Yau whenever the quiver is connected and contains an unoriented cycle. They conjecture that it is still Calabi--Yau 
for every quiver that is connected and not Dynkin; they also conjecture that, under these assumptions, the multiplicative preprojective algebra and its dg-version are quasi-isomorphic (and they prove this for quivers satisfying 
a ``strong free product property''). This is consistent with our result, which only concerns the dg-version, but holds without any assumption on the quiver. 

\subsection*{Acknowledgements}
The first and second author have received funding from the European Research Council (ERC) under the
European Union's Horizon 2020 research and innovation programme (Grant Agreement No.\ 768679).


\section{Calabi--Yau structures}\label{section2}

\subsection{Recollection on Calabi--Yau structures}

Along this paper, we use the same notation and terminology as in \cite{BCS}. We recall briefly the most important information.  Note also that in this paper $k$ is a field of characteristic zero.

\medskip

We denote by  $\cat{Mod}_k$ the category of cochain complexes over $k$. A \textit{dg-category} is a $\cat{Mod}_k$-enriched category and  the category of dg-categories with dg-functors is denoted by $\cat{Cat}_k$. 
We refer to \cite{KellerDG,ToDG2} for a detailed introduction to dg-categories and their homotopy theory.  

If $\cat{M}$ is a model category, we will write $\scat{M}$ for the corresponding 
$\infty$-category obtained by localizing along weak equivalences. 

We use the notation ``$\cat{Map}$'' to distinguish the \textit{space} of $\infty$-categorical morphisms from the \textit{set} 
of $1$-categorical morphisms, for which we use the notation ``$\cat{Hom}$''. The underlined versions designate their enriched 
counterparts (unless otherwise specified, the enrichment is over complexes). 
If a category has a symmetric monoidal structure which is closed, we use upper case letters for the internal enrichment, i.e.\
$\textsc{Hom}$ and $\textsc{Map}$, for categories and $\infty$-categories, respectively. 

\medskip

Recall the \textit{Hochschild chains} functor 
\[
\cat{HH}\,:\,\scat{Cat}_k\longrightarrow \scat{Mod}_k\,;\,
\cat{A}\longmapsto \cat{A}\underset{\cat{A}^e}{\overset{\mathbb{L}}{\otimes}}\cat{A}^{\op}\,,
\]
where $\cat{A}^e:=\cat{A}\otimes\cat{A}^{\op}$. We write $\cat{HH}_{i}(\cat{A})$ for the $(-i)$-th cohomology of $\cat{HH}(\cat{A})$. 
Dually, $\cat{HH}^i(\cat{A})$ is defined as the $i$-th cohomology of $\mathbb{R}\ul{\cat{Hom}}_{{\cat{A}^e}} (\cat{A}, \cat{A})$. 

\medskip

Hochschild chains carry a mixed structure, which is given in the standard explicit model by Connes's $B$-operator. 
The negative cyclic complex of $\cat{A}$, denoted by $\cat{HC}^-(\cat{A})$, is defined as the homotopy fixed points of $\cat{HH}(\cat{A})$ with respect to the mixed structure; 
it comes with a natural transformation $(-)^\natural: \cat{HC}^- \Rightarrow \cat{HH}$. 
As before, $\cat{HC}^-_{i}(\cat{A})$ stands for the $(-i)$-th cohomology of $\cat{HC}^-(\cat{A})$. 

Recall the \textit{inverse dualizing functor} \[
(-)^\vee\,:\,\scat{Mod}_{\cat{A}^e}\longrightarrow \scat{Mod}_{\cat{A}^e}^{\op}
\]
that is given as follows: for a right $\cat{A}^e$-module $\cat{M}$, and an object $a\in\mathrm{Ob}(\cat{A}^{\op}\otimes\cat{A})$, 
\[
\cat{M}^\vee(a):= \mathbb{R}\ul{\cat{Hom}}_{\cat{Mod}_{(\cat{A}^e)^{\op}}}\big(\cat{M}\circ\tau,\cat{A}^e(a,-)\big)\,.
\]
where $\tau$ is the anti-involution $\tau:\cat{A}^e\tilde\longrightarrow(\cat{A}^e)^{\op}$ such that $\cat{A}\circ\tau=\cat{A}^{\op}$.

A dg-category $\cat{A}$ is \emph{smooth} if $\cat{A}$ is a perfect $\cat{A}^e$-module. For smooth dg-categories, we have the following equivalences 
\[
(-)^\flat: \cat{HH}( \cat{A}) \stackrel{\sim} \longrightarrow    \mathbb{R}\ul{\cat{Hom}}_{\cat{Mod}_{\cat{A}^e}} (\cat{A}^\vee, \cat{A})
\]
and 
\[
\mathbb{R}\ul{\cat{Hom}}_{\cat{Mod}_{\cat{A}^e}} (\cat{A}, \cat{A}) \simeq   \cat{A}^\vee \overset{\mathbb{L}}{\underset{\cat{A}^e}{\otimes}}\cat{A}\,. 
\]

\begin{definition}\label{def:CY}
Let $\cat{A}$ be a  smooth dg-category. 
\begin{enumerate}
\item A class $c:k[n]\to\cat{HH}(\cat{A})$ such that $c^\flat:\cat{A}^\vee[n]\to \cat{A}$ is an equivalence is called \textit{non-degenerate}. 
Such a non-degenerate Hochschild class is called an \emph{almost $n$-Calabi--Yau} structure on $\cat{A}$. 
\item A \emph{$n$-Calabi--Yau structure} on $\cat{A}$ is a class $c:k[n]\to \cat{HC}^-(\cat{A})$ such that $c^\natural$ 
is non-degenerate. 
\end{enumerate}
\end{definition}

We now recall (relative) Calabi--Yau structures on morphisms and cospans of dg-categories, following Brav--Dyckerhoff~\cite{BD1} and To\"en~\cite[\S5.3]{ToEMS}. 

\begin{definition}
Let $\cat{A}\overset{f}{\longrightarrow}\cat{C}\overset{g}{\longleftarrow}\cat{B}$ be a cospan of smooth dg-categories. 
\begin{enumerate}
\item An \emph{almost $n$-Calabi--Yau structure} on this cospan is the data of a homotopy commuting diagram 
\[
\xymatrix{
k[n] \ar[r]^{c_{\cat{B}}}\ar[d]_{c_{\cat{A}}}  & \cat{HH}(\cat{B}) \ar[d] \\
\cat{HH}(\cat{A}) \ar[r] & \cat{HH}(\cat{C})
}\] 
such that $c_{\cat{A}}$ and $c_{\cat{B}}$ are non-degenerate in the sense of Definition \ref{def:CY}(1), and such that the homotopy 
$\cat{HH}(f)(c_{\cat{A}})\sim \cat{HH}(g)(c_{\cat{B}})$ is non-degenerate in the following sense: the induced (homotopy) commuting square 
\[
\xymatrix{
\cat{C}^\vee[n]\ar[r]^-{g^\vee}\ar[d]_-{f^\vee} 
& (\cat{B}^\vee[n])\overset{\mathbb{L}}{\underset{\cat{B}^e}{\otimes}}\cat{C}^e 
\overset{c_{\cat{B}}^\flat\otimes\mathrm{id}}{\simeq} \cat{B}\overset{\mathbb{L}}{\underset{\cat{B}^e}{\otimes}}\cat{C}^e \ar[d]^-{g\otimes\mathrm{id}} \\
(\cat{A}^\vee[n])\overset{\mathbb{L}}{\underset{\cat{A}^e}{\otimes}}\cat{C}^e 
\overset{c_{\cat{A}}^\flat\otimes\mathrm{id}}{\simeq} \cat{A}\overset{\mathbb{L}}{\underset{\cat{A}^e}{\otimes}}\cat{C}^e \ar[r]^-{f\otimes\mathrm{id}}
& \cat{C}
}
\]
is cartesian. 
\item An $n$-Calabi--Yau structure on the cospan is a homotopy 
commuting diagram 
\[
\xymatrix{
k[n] \ar[r]^{c_{\cat{B}}}\ar[d]_{c_{\cat{A}}}  & \cat{HC}^-(\cat{B}) \ar[d] \\
\cat{HC}^-(\cat{A}) \ar[r] & \cat{HC}^-(\cat{C})
}\]
such that the image under $(-)^{\natural} $ is an almost $n$-Calabi--Yau structure. 
\item If $\cat{A}=\varnothing$ then we call these (almost) $n$-Calabi--Yau structures on the morphism $g$. 
\end{enumerate}
\end{definition}
Recall that $n$-Calabi--Yau cospans do compose: after \cite[Theorem 6.2]{BD1}, the non-degeneracy property is preserved under composition. 

We finally note that whenever $\cat{A}=\cat{B}=\varnothing$, an $n$-Calabi--Yau structure on $\varnothing\rightarrow\cat{C}\leftarrow\varnothing$ is the same as an $(n+1)$-Calabi--Yau structure on $\cat{C}$. 
In particular, the push-out of two $n$-Calabi--Yau morphisms automatically inherits an $(n+1)$-Calabi--Yau structure. 

\subsection{Existence and uniqueness of cyclic lifts}

\begin{proposition}[\cite{TdV-VdB}, Section 5]\label{prop: lift}
Suppose $\cat{B}$ is a smooth dg-category. If $\cat{B}$ is almost $n$-Calabi--Yau, then $\cat{HH}^i(\cat{B}) \simeq \cat{HH}_{n-i}(\cat{B})$ for every $i\in\mathbb{Z}$. 
Furthermore, if $\cat{B}$ is concentrated in degree zero then
\begin{itemize}
\item[(a)] $\cat{HH}_i(\cat{B})=0 $ for all $i \not=0, 1, \ldots, n$;
\item[(b)] $\cat{HC}_i^-(\cat{B})=0 $ for all $i>n$; 
\item[(c)] the natural map $\cat{HC}_n^-(\cat{B})\to \cat{HH}_n(\cat{B})$ is an isomorphism. 
\end{itemize}
In particular, every almost $n$-Calabi--Yau structure on $\cat{B}$ admits an $n$-Calabi--Yau lift. 
\end{proposition}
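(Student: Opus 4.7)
The plan is to derive a Poincaré-duality isomorphism from the almost Calabi--Yau class, use it to confine the Hochschild homology to a bounded range, and then transfer that information to the negative cyclic complex via the standard $S^1$-fiber sequence.

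First I would combine the smoothness equivalence $\mathbb{R}\ul{\cat{Hom}}_{\cat{B}^e}(\cat{B},\cat{B}) \simeq \cat{B}^\vee \overset{\mathbb{L}}{\underset{\cat{B}^e}{\otimes}} \cat{B}$ recalled in the excerpt with the equivalence $c^\flat : \cat{B}^\vee[n] \xrightarrow{\sim} \cat{B}$ provided by the almost $n$-Calabi--Yau structure. This yields
\[
\mathbb{R}\ul{\cat{Hom}}_{\cat{B}^e}(\cat{B},\cat{B}) \;\simeq\; \cat{B}[-n] \overset{\mathbb{L}}{\underset{\cat{B}^e}{\otimes}} \cat{B} \;\simeq\; \cat{HH}(\cat{B})[-n],
\]
and passing to $i$-th cohomology immediately gives the first assertion $\cat{HH}^i(\cat{B}) \simeq \cat{HH}_{n-i}(\cat{B})$.

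Assuming from now on that $\cat{B}$ is concentrated in degree zero, for (a) I would observe that $\cat{B}$ is then a $\cat{B}^e$-module in degree $0$, so $\cat{HH}^i(\cat{B}) = \mathrm{Ext}^i_{\cat{B}^e}(\cat{B},\cat{B}) = 0$ for $i<0$; the duality isomorphism above then gives $\cat{HH}_j(\cat{B}) = 0$ for $j>n$. The complementary vanishing $\cat{HH}_j(\cat{B}) = 0$ for $j<0$ is automatic, since the derived tensor product of two $\cat{B}^e$-modules concentrated in degree $0$ sits in non-positive cohomological degrees.

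For (b) and (c), I would invoke the standard $S^1$-fiber sequence
\[
\cat{HC}^-(\cat{B})[-2] \xrightarrow{\,u\,} \cat{HC}^-(\cat{B}) \xrightarrow{(-)^\natural} \cat{HH}(\cat{B})
\]
and the resulting long exact sequence
\[
\cdots \to \cat{HH}_{j+1}(\cat{B}) \to \cat{HC}^-_{j+2}(\cat{B}) \xrightarrow{u} \cat{HC}^-_j(\cat{B}) \xrightarrow{(-)^\natural} \cat{HH}_j(\cat{B}) \to \cdots
\]
The boundedness in (a), together with $u$-adic completeness, forces $\cat{HC}^-_i(\cat{B}) = 0$ for $i>n$; the cleanest packaging is the convergent spectral sequence associated with the $u$-filtration, whose $E_1$-page contributing to $\cat{HC}^-_i(\cat{B})$ consists of the pieces $\cat{HH}_{i+2k}(\cat{B})$ with $k\geq 0$, all vanishing when $i>n$ by (a). Applying the long exact sequence at $j=n$, the flanking terms $\cat{HH}_{n+1}(\cat{B})$, $\cat{HC}^-_{n+2}(\cat{B})$ and $\cat{HC}^-_{n+1}(\cat{B})$ vanish by (a) and (b), so it collapses to an isomorphism $\cat{HC}^-_n(\cat{B}) \xrightarrow{\sim} \cat{HH}_n(\cat{B})$, proving (c). The final statement is then immediate: an almost $n$-Calabi--Yau structure is a class $c\in\cat{HH}_n(\cat{B})$, and (c) lifts it uniquely to a class in $\cat{HC}^-_n(\cat{B})$. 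The main obstacle I anticipate is the step from (a) to (b): while the $E_1$-terms vanish directly, one must carefully invoke Hausdorffness and completeness of the $u$-adic filtration on $\cat{HC}^-(\cat{B}) \simeq \cat{HH}(\cat{B})^{hS^1}$ in order to conclude vanishing of the abutment, and this is where a careful treatment of convergence is needed.
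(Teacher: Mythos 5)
Your proof is correct and follows essentially the same route as the paper: duality $\cat{HH}^i(\cat{B})\simeq\cat{HH}_{n-i}(\cat{B})$ induced by $c^\flat$, the vanishing range from concentration in degree zero, and the $u$-filtration spectral sequence on $\cat{HC}^-$ to get (b) and (c). Your extra care about completeness of the $u$-adic filtration (automatic here, since $\cat{HC}^-$ is built from formal power series in $u$) is a point the paper passes over silently, but it does not change the argument.
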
 
\begin{proof}
This is essentially \cite[Proposition 5.5, Corollary 5.6 \& Proposition 5.7]{TdV-VdB}. We reproduce the proof here for the reader's convenience. 

We have an isomorphism of $\cat{B}^e$-modules $c^\flat: \cat{B}^\vee[n] \simeq \cat{B}$. It yields 
\[
\cat{HH}_{n-i}(\cat{B})\simeq \Hom_{\mathrm{Ho}(\scat{Mod}_{\cat{B}^e})}(\cat{B}^\vee[n], \cat{B}[i]) \overset{c^\flat}{\simeq}  \Hom_{\mathrm{Ho}(\scat{Mod}_{\cat{B}^e})}(\cat{B}, \cat{B}[i])\simeq \cat{HH}^i(\cat{B})\,.
\]
If $\cat{B}$ is concentrated in degree zero then its Hochschild homology and cohomology are concentrated in non-negative degrees, and thus, using the above identifications, 
$\cat{HH}_i(\cat{B})=0 $ for all $i \not=0, 1, \ldots, n$. 
We then consider the negative cyclic complex, which is given by taking formal power series in a degree $2$ variable $u$ with coefficients in the Hochschild complex, and differential being given as $d-u\delta$, 
where $\delta$ is the mixed differential. The first page of the spectral sequence associated with the 
filtration by powers of $u$ reads as follows: 
\[\xymatrix { 0 & u\cat{HH}_0(\cat{B}) \ar[r]^-{u\delta} & u^2\cat{HH}_1(\cat{B})\ar[r]^-{u\delta}  & \cdots & u^{n+1}\cat{HH}_n(\cat{B}) \\
\cat{HH}_0(\cat{B}) \ar[r]^-{u\delta}& u\cat{HH}_1(\cat{B}) \ar[r]^-{u\delta}& \cdots  & u^n\cat{HH}_n(\cat{B}) & 0  \\
\cat{HH}_1(\cat{B}) \ar[r]^-{u\delta} & u\cat{HH}_2(\cat{B}) \ar[r]^-{u\delta} & \cdots & 0 \\
\cdots &&&\\
\cat{HH}_{n}(\cat{B}) & 0 & 0& \cdots 
}\]
This proves (a) and (b). 
\end{proof} 
\begin{remark}\label{remark-prop}
Under the assumption of Proposition~\ref{prop: lift}, the duality isomorphism extends to Hochschild homology with values in any $\cat{B}$-bimodule $\cat{M}$: $\mathrm{H}^i(\cat{B},\cat{M})\simeq \mathrm{H}_{n-i}(\cat{B},\cat{M})$. 
Moreover, if both $\cat{B}$ and $\cat{M}$ are concentrated in degree $0$, then (a) still holds: $\mathrm{H}_{i}(\cat{B},\cat{M})$ vanishes for all $i\neq0,\dots,n$. 
\end{remark}

\begin{theorem}\label{theorem-astuce}
Let $F: \cat{B} \to \cat{C}$ be a functor between smooth dg-categories that are concentrated in degree zero. 
Every almost $n$-Calabi--Yau structure on $F$ admits a unique $n$-Calabi--Yau lift. 
\end{theorem}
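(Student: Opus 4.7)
The plan is to adapt the spectral sequence argument of Proposition~\ref{prop: lift} to the relative setting of a morphism. Write $\cat{HH}_{\mathrm{rel}}(F):=\mathrm{fib}(\cat{HH}(\cat{B})\to\cat{HH}(\cat{C}))$ and $\cat{HC}^-_{\mathrm{rel}}(F):=\mathrm{fib}(\cat{HC}^-(\cat{B})\to\cat{HC}^-(\cat{C}))$. By definition, an almost $n$-Calabi--Yau (resp.\ $n$-Calabi--Yau) structure on $F$ corresponds to a non-degenerate class $k[n]\to\cat{HH}_{\mathrm{rel}}(F)$ (resp.\ a class $k[n]\to\cat{HC}^-_{\mathrm{rel}}(F)$ lifting the former). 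The existence and uniqueness of the lift will therefore follow once one shows that the induced map $(-)^\natural\colon\cat{HC}^-_{\mathrm{rel},n}(F)\to\cat{HH}_{\mathrm{rel},n}(F)$ is bijective.

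The heart of the proof consists in establishing $\cat{HH}_{\mathrm{rel},m}(F)=0$ for all $m>n$. To achieve this, unpack the non-degeneracy condition of Definition~\ref{def:CY} with $\cat{A}=\varnothing$ (using that $c_{\cat{B}}^\flat$ is an equivalence): the cartesian-square condition rephrases as a fiber sequence of $\cat{C}^e$-modules
\[
\cat{C}^\vee[n]\longrightarrow \cat{B}\overset{\mathbb{L}}{\underset{\cat{B}^e}{\otimes}}\cat{C}^e\longrightarrow \cat{C}.
\]
Tensoring with $\cat{C}$ over $\cat{C}^e$, and combining the identification $\cat{C}\otimes^{\mathbb{L}}_{\cat{C}^e}\cat{C}^\vee\simeq\mathbb{R}\ul{\cat{Hom}}_{\cat{C}^e}(\cat{C},\cat{C})$ (valid since $\cat{C}$ is smooth) with the base-change isomorphism between $F_!$ and $F^*$ on the middle term, one obtains the fiber sequence
\[
\mathbb{R}\ul{\cat{Hom}}_{\cat{C}^e}(\cat{C},\cat{C})[n]\longrightarrow \cat{HH}(\cat{B},F^*\cat{C})\longrightarrow \cat{HH}(\cat{C}).
\]
Since the canonical map $\cat{HH}(\cat{B})\to\cat{HH}(\cat{C})$ factors through $\cat{HH}(\cat{B},F^*\cat{C})$, the pasting law for fiber squares yields a further fiber sequence
\[
\cat{HH}(\cat{B},K)\longrightarrow \cat{HH}_{\mathrm{rel}}(F)\longrightarrow \mathbb{R}\ul{\cat{Hom}}_{\cat{C}^e}(\cat{C},\cat{C})[n],
\]
where $K:=\mathrm{fib}(\cat{B}\to F^*\cat{C})$ is a $\cat{B}^e$-module whose cohomology is supported in degrees $0$ and $1$.

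The degree bookkeeping is then clean: since $\cat{C}$ sits in degree zero, $\cat{HH}^i(\cat{C})=0$ for $i<0$, so the right-hand term above vanishes in homological degrees $m>n$; and since $\cat{B}$ is almost $n$-Calabi--Yau and concentrated in degree zero, Remark~\ref{remark-prop} applies to both $H^0(K)$ and $H^1(K)$ (each a $\cat{B}$-bimodule concentrated in degree zero), giving $\cat{HH}_m(\cat{B},K)=0$ for $m>n$ via the long exact sequence associated to the truncation triangle of $K$. The long exact sequence of the displayed fiber sequence finally delivers $\cat{HH}_{\mathrm{rel},m}(F)=0$ for all $m>n$.

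To conclude, I will run the $u$-adic spectral sequence of Proposition~\ref{prop: lift} on $\cat{HC}^-_{\mathrm{rel}}(F)$: its $E_1$-page reads $E_1^{p,q}=u^p\cat{HH}_{\mathrm{rel},q}(F)$ (contributing to $\cat{HC}^-_{\mathrm{rel},q-2p}(F)$), and the vanishing just established forces $\cat{HC}^-_{\mathrm{rel},i}(F)=0$ for $i>n$, while the only surviving contribution to $\cat{HC}^-_{\mathrm{rel},n}(F)$ is the term $E_1^{0,n}=\cat{HH}_{\mathrm{rel},n}(F)$. As the corresponding edge map is precisely $(-)^\natural$, the required bijection follows. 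The main obstacle is the construction and analysis of the fiber sequence expressing $\cat{HH}_{\mathrm{rel}}(F)$ in terms of the shifted Hochschild cochain complex of $\cat{C}$; once that is in place, the spectral-sequence argument is a direct transcription of the absolute case in Proposition~\ref{prop: lift}.
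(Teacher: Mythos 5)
Your proposal is correct, and it reorganizes the argument in a genuinely different way from the paper's proof, even though both rest on the same three inputs: the fiber sequence $\cat{C}^\vee[n]\to\cat{B}\overset{\mathbb{L}}{\underset{\cat{B}^e}{\otimes}}\cat{C}^e\to\cat{C}$ encoding non-degeneracy, the duality of Remark~\ref{remark-prop}, and the $u$-adic spectral sequence. The paper applies $\mathbb{R}\ul{\cat{Hom}}_{\cat{C}^e}(-,\cat{C})$ to that fiber sequence to prove vanishing of the \emph{absolute} groups $\cat{HH}_i(\cat{C})$ for $i\neq 0,\dots,n+1$, runs the spectral sequence on $\cat{HC}^-(\cat{C})$ alone, and concludes with a four-lemma chase comparing the long exact sequences of the pair in negative cyclic and Hochschild homology. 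You instead tensor the fiber sequence with $\cat{C}$ over $\cat{C}^e$, splice the result against $\cat{HH}(\cat{B},K)$ with $K=\mathrm{fib}(\cat{B}\to F^*\cat{C})$ to prove vanishing of the \emph{relative} groups $\cat{HH}_{\mathrm{rel},m}(F)=\cat{HH}_{m+1}(\cat{C},\cat{B})$ for $m>n$, and read off bijectivity of $(-)^\natural$ in degree $n$ directly from the spectral sequence of the relative mixed complex. Your route avoids the final diagram chase and yields the formally cleaner statement that $\cat{HC}^-_{n+1}(\cat{C},\cat{B})\to\cat{HH}_{n+1}(\cat{C},\cat{B})$ is an isomorphism of groups (not merely a bijection between affine spaces of lifts); the paper's route produces, as a byproduct, the explicit vanishing range for $\cat{HH}_*(\cat{C})$, which is of independent interest. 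In a final write-up you should spell out two small points you leave implicit: that the relative negative cyclic complex is the $u$-power-series completion of the relative mixed complex (so the filtration spectral sequence applies, since $(-)[[u]]$ is a product and commutes with fibers), and that $E_1^{0,n}$ survives to $E_\infty$ because every differential out of it lands in a subquotient of $u^r\cat{HH}_{\mathrm{rel},n-1+2r}(F)=0$ for $r\geq 1$.
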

\begin{proof}
First of all, we know from Proposition~\ref{prop: lift} that the almost $n$-Calabi--Yau structure $c_{\cat{B}}\in\cat{HH}_n(\cat{B})$ on $\cat{B}$ uniquely lifts to a $n$-Calabi--Yau structure $c_{\cat{B}}^-\in\cat{HC}^-_n(\cat{B})$. 
The other part of the almost $n$-Calabi--Yau structure on $F$ is a homotopy from $F(c_{\cat{B}})$ to $0$, which amounts to the choice of a relative lift $c_F\in \cat{HH}_{n+1}(\cat{C},\cat{B})$ of $c_{\cat{B}}$. 
Indeed, $\cat{HH}_i(\cat{C},\cat{B})$ is defined as the $(-i)$-th cohomology of the homotopy cofiber (or, mapping cone) of $\cat{HH}(\cat{B})\to \cat{HH}(\cat{C})$, so that we have a long exact sequence 
\[
\cdots\to \cat{HH}_{n+1}(\cat{C},\cat{B})\to \cat{HH}_n(\cat{B})\to \cat{HH}_n(\cat{C})\to \cat{HH}_{n}(\cat{C},\cat{B})\to\cdots
\] 
The non-degeneracy of $c_F$ tells us that the nul-homotopic sequence of $\cat{C}^e$-modules
\[
\cat{C}^\vee[n] \to \cat{B}^\vee[n] \otimes_{\cat{B}^e} \cat{C}^e \simeq \cat{B}\otimes_{\cat{B}^e} \cat{C}^e \to \cat{C}
\]
is actually a homotopy fiber sequence. Applying $\Hom_{\mathrm{Ho}(\scat{Mod}_{\cat{C}^e})}(-,\cat{C})$ yields a long exact sequence 
\[
\cdots\to \cat{HH}^k(\cat{C})\to \mathrm{H}^k(\cat{B},\cat{C})\simeq \mathrm{H}_{n-k}(\cat{B},\cat{C})\to \cat{HH}_{n-k}(\cat{C})\to \cat{HH}^{k+1}(\cat{C})\to \cdots
\]
Hence, using that Hochschild homology and cohomology of $\cat{C}$ vanishes for negative indices (because $\cat{C}$ is concentrated in degree $0$), together with the version from remark~\ref{remark-prop} of the vanishing property (a), we get that 
the Hochschild homology (and cohomology) of $\cat{C}$ vanishes in degrees $i \not= 0, \ldots, n+1$.
We again look at the first page of the Hochschild-to-negative cyclic spectral sequence: 

\[\xymatrix { 0 & u\cat{HH}_0(\cat{C}) \ar[r]^-{u\delta} & u^2\cat{HH}_1(\cat{C})\ar[r]^-{u\delta}  & \cdots & u^{n+2}\cat{HH}_{n+1}(\cat{C}) \\
\cat{HH}_0(\cat{C}) \ar[r]^-{u\delta}& u\cat{HH}_1(\cat{C}) \ar[r]^-{u\delta}& \cdots  & u^{n+1}\cat{HH}_{n+1}(\cat{C}) & 0  \\
\cat{HH}_{1}(\cat{C}) \ar[r]^-{u\delta} & u\cat{HH}_2(\cat{C}) \ar[r]^-{u\delta} & \cdots & \\
\cdots &&&\\
\cat{HH}_n(\cat{C}) \ar[r]^-{u\delta} &  u\cat{HH}_{n+1}(\cat{C})& 0 &0 \\
~\cat{HH}_{n+1}(\cat{C}) & 0 & 0& 
}\]

Putting this together, we obtain the following morphism of exact sequences:  
\[\xymatrix{ 0\ar[r]& \cat{HC}^-_{n+1}(\cat{C}) \ar[d]^[left]{\sim}  \ar[r] & \cat{HC}^-_{n+1}(\cat{C},\cat{B}) \ar[d] \ar[r] & \cat{HC}^-_{n}(\cat{B}) \ar[d]^[left]{\sim} \ar[r]  & \cat{HC}^-_n(\cat{C}) \ar@{>->}[d] \\
0\ar[r]& \cat{HH}_{n+1}(\cat{C})  \ar[r] & \cat{HH}_{n+1}(\cat{C},\cat{B})  \ar[r] &\cat{HH}_n(\cat{B}) \ar[r]  & \cat{HH}_n(\cat{C} )}
\]

The injectivity of the rightmost arrow follows from the fact that \[
\cat{HC}^-_n(\cat{C}) \simeq \ker\big( \cat{HH}_n(\cat{C}) \to u\cat{HH}_{n+1}(\cat{C})\big)\,,\]
and it implies that the image of $c_{\cat{B}}^-$ via $\cat{HC}^-_n(\cat{B}) \to \cat{HC}^-_n(\cat{C})$ vanishes (because the image of $c_{\cat{B}}$ through $\cat{HH}_n(\cat{B}) \to \cat{HH}_n(\cat{C})$ does so). 
Therefore $c_{\cat{B}}^-$ lifts to a relative class in $\cat{HC}^-_{n+1}(\cat{C},\cat{B})$. The map from the affine space of relative lifts of $c_{\cat{B}}^-$ to the affine space of relative lifts of $c_{\cat{B}}$ is affine and modelled on the 
linear map $\cat{HC}^-_{n+1}(\cat{C})\to\cat{HH}_{n+1}(\cat{C})$, which is an isomorphism. Using that both affine spaces are non-empty, we get that the map from relative lifts of $c_{\cat{B}}^-$ to relative lifts of $c_{\cat{B}}$ is a bijection. 
Hence we get that a cyclic lift of $c_F$ exists and is unique. 
\end{proof}


\section{Calabi--Yau structures associated with $k[x^{\pm1}]$} \label{section: CY}

\subsection{A Calabi--Yau structure on $k[x^{\pm1}]$}

Let $\mathcal A=k[x^{\pm1}]=k[\mathbb{G}_m]$. It is the function ring of a smooth affine algebraic variety; hence 
$1$-Calabi--Yau structures on $\mathcal A$ are exactly non-vanishing top degree (here, degree $1$) forms. 
The Calabi--Yau structure we consider on $\mathcal A$ is, up to a scalar, $\alpha:=d_{dR}\log(x)=x^{-1}d_{dR}x$. In the rest of this subsection, 
we provide descriptions of this $1$-Calabi--Yau structure that will be convenient for later purposes. 

\begin{remark}\label{rem:inv}
Notice that the inverse morphism $inv:x\mapsto x^{-1}$ allows to identify $(\mathcal A,\alpha)$ with $(\mathcal A,-\alpha)$.  
We also observe that $\alpha$ is invariant under rescaling maps $x\mapsto q x$, $q\in k^\times$ (i.e.~it is of zero 
weight for the action of $\mathbb{G}_m$ on itself by multiplication). 
\end{remark}

\begin{remark}\label{remyeung}
In \cite{Yeung}, Yeung also considers a Calabi--Yau structure on $k[z^{\pm1}]$, which is \emph{different} form ours: Yeung's Calabi--Yau structure is given by $d_{dR}z$, and is exact, as opposed to ours. 
On moduli of representations, the Calabi--Yau structure we consider gives back the $1$-shifted symplectic structure that encodes the quasi-hamiltonian formalism (see Section \ref{section-comparison} below);
we expect that Yeung's Calabi--Yau structure rather leads to a linearized version of it. As a matter of fact, if one considers the $\mathcal I$-adic completion $\hat{\mathcal A}$ at the kernel $\mathcal I$ of the evaluation at $x=1$, 
the morphism $k[z]\to \hat{\mathcal A}$ sending $z$ to $\log(x)$ is well-defined and sends the canonical Calabi--Yau structure $d_{dR}z$ on $k[z]$ to $\alpha$. 
\end{remark}

\subsubsection{The cyclic cycle}

We work with the normalized Hochschild complex $C_n(\mathcal A)=\mathcal A\otimes\bar {\mathcal A}^{\otimes n}$ where $\bar{ \mathcal A}=\mathcal A/k$, with Hochschild differential $b$. On $C_n(\mathcal A)$, the 
Connes boundary map is given by \[
B(x_0\otimes\dots\otimes x_n)=\sum_{i=0}^n(-1)^{ni}1\otimes x_i\otimes\dots\otimes x_n\otimes x_0\otimes\dots\otimes x_{i-1}.\]
 We set
\[
2\alpha_n=(x^{-1}\otimes x)^{\otimes n}-(x\otimes x^{-1})^{\otimes n}\in C_{2n-1}(\mathcal A),
\]
so that
\[
b(\alpha_n)=2(1\otimes\alpha_{n-1})\qquad\textrm{and}\qquad
B(\alpha_n)=2n(1\otimes \alpha_n).
\]
A direct computation then shows that 
\[
\alpha=\sum_{k\ge0}k!u^k\alpha_{k+1}
\]
satisfies $(b-uB)(\alpha)=0$. 

\subsubsection{Proof of non-degeneracy}\label{absnondeg}

We want to prove that $\alpha_1=\frac12(x^{-1}\otimes x-x\otimes x^{-1})$ is non-degenerate. 
First observe that the class of $x^{-1}\otimes x$ equals the one of $-x\otimes x^{-1}$ (and thus, the one of $\alpha_1$) in $\cat{HH}_1(\mathcal A)\simeq \Omega^1_{\mathcal A}$. 
Indeed, in (cohomological) degree $-1$ the Hochschild homology of ${\mathcal A}$ is $\Omega^1_{\mathcal A}=k[x^{\pm1}]d_{dR}x$. 
The class of a $x^{-1}\otimes x$, resp.~$-x\otimes x^{-1}$, is computed \textit{via} the Hochschild--Kostant--Rosenberg 
(HKR) map $a\otimes b\mapsto ad_{dR}b$, and we find
\[
x^{-1}d_{dR}x\,,\qquad\textrm{resp.}\quad -xd_{dR}(x^{-1})=xx^{-2}d_{dR}x=x^{-1}d_{dR}x\,.
\]
Hence it is sufficient to prove that the Hochschild $1$-cycle $x^{-1}\otimes x$ is non-degenerate. 

\medskip

The reduced Bar resolution of ${\mathcal A}$ is given by  
\[
\bar{\mathrm{B}}({\mathcal A})=\bigoplus_{n\geq0}\big({\mathcal A}\otimes\bar {\mathcal A}^{\otimes n}\otimes {\mathcal A}\big)[n]
\]
with the usual differential being given by an alternating sum of products of successive elements. 
We also have a smaller resolution 
\[
\mathrm{R}({\mathcal A})={\mathcal A}^e[1]\oplus {\mathcal A}^e
\]
with differential sending $1\otimes 1$ to $x\otimes 1-1\otimes x$. 
Its dual is 
\[
\mathrm{R}({\mathcal A})^\vee={\mathcal A}^e\oplus {\mathcal A}^e[-1]
\]
with the same formula for the differential. 

\medskip

There's a map $\mathrm{R}({\mathcal A})\to\bar{\mathrm{B}}({\mathcal A})$. In degree $0$ it is the identity, and in degree $-1$ it is given by 
$f\otimes g\mapsto f\otimes x\otimes g$. Using this smaller resolution we obtain the ``small Hochschild complex'': 
\[
{\mathcal A}[1]\oplus {\mathcal A}
\]
with zero differential. It maps inside the standard Hochschild complex as follows: in degree $0$ it is the identity, and in degree $-1$ it sends $f$ to $f\otimes x$. 
(this map is in fact a quasi-inverse to the HKR quasi-isomorphism). 
In the small Hochschild complex, the class of interest reads as $x^{-1}$, and one can show that as a map 
\begin{equation}
\label{eqn:iso}
\mathrm{R}({\mathcal A})^\vee[1]\longrightarrow \mathrm{R}({\mathcal A})
\end{equation}
it is nothing but the product with $x^{-1}\otimes 1$ (in both degrees), which is an isomorphism of complexes. 

\begin{remark}
We could have proven non-degeneracy first, and then use \cite[Proposition 5.7]{TdV-VdB} (see also Proposition~\ref{prop: lift} above) in order to obtain the existence (and uniqueness) of a cyclic lift. 
\end{remark}

\subsubsection{Yet another description of the Calabi--Yau structure on $k[x^{\pm1}]$}

For every $n$-Calabi--Yau category $\mathcal A$, with Calabi--Yau structure $c$, one can consider the same category with opposite Calabi--Yau structure $-c$, and denote it $\bar{\mathcal A}$. 
Then the functor $\mathcal{A}\coprod \bar{\mathcal A}\to\mathcal A$ is relative Calabi--Yau. 

\medskip

Let $k=ke$ be the terminal dg-category ($e$ denotes the identity of the single object); it is obviously $0$-Calabi--Yau, with Calabi--Yau structure being $e$. 
\begin{proposition}\label{prop: CYpushout}
There is an equivalence 
\[
k[x^{\pm1}]\simeq k\underset{k\coprod\bar{k}}\coprod k
\]
of $1$-Calabi--Yau dg-categories, where the Calabi--Yau structure on the left-hand-side is $\alpha$, and the one on the right-hand-side is obtained as a Calabi--Yau push-out. 
\end{proposition}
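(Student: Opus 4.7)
My plan is twofold: first identify the underlying dg-category of the pushout with $k[x^{\pm 1}]$, then verify that the resulting $1$-Calabi--Yau structure coincides with $\alpha$.

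For the dg-category identification, I would compute the homotopy pushout by cofibrantly replacing one of the fold maps $k \coprod \bar k \to k$. The natural candidate is the inclusion $k \coprod k \hookrightarrow I$, where $I$ is the dg-category with two objects $0, 1$ linked by a single invertible morphism $f: 0 \to 1$. Since $I$ is quasi-equivalent to $k$ and the inclusion is a cofibration, the homotopy pushout can be computed as the strict pushout $I \coprod_{k \coprod k} k$; this fuses the two objects of $I$ into one and turns $f$ into an invertible endomorphism, recovering $k[x^{\pm 1}]$.

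For matching the Calabi--Yau structures, I would invoke Theorem~\ref{theorem-astuce}: since $k[x^{\pm 1}]$ is smooth and concentrated in degree zero, any almost $1$-Calabi--Yau structure has a unique cyclic lift, so it suffices to identify the underlying non-degenerate Hochschild class. The Mayer--Vietoris sequence for Hochschild homology associated to the pushout square reads
\[
0 \longrightarrow \cat{HH}_1(k[x^{\pm 1}]) \longrightarrow \cat{HH}_0(k\coprod\bar k) \longrightarrow \cat{HH}_0(k)\oplus\cat{HH}_0(k),
\]
exhibiting $\cat{HH}_1(k[x^{\pm 1}])$ as a one-dimensional kernel generated by $(e,-e)$. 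By construction, the Hochschild class produced by the pushout is the image of this generator; tracing it through the cofibrant model realizes it as the loop built from $f$, which matches (up to a unit) the class $x^{-1}\otimes x = 2\alpha_1$ from Section~\ref{absnondeg}.

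The main obstacle I anticipate is pinning down the overall scalar on the nose rather than only up to a unit. The automorphisms of $k[x^{\pm 1}]$ as a $k$-algebra form an extension of $\mathbb{Z}/2$ (via $x \leftrightarrow x^{-1}$) by $\mathbb{G}_m$ (via $x \mapsto qx$), and by Remark~\ref{rem:inv} these act on $\alpha$ only through a sign; there is no freedom to absorb a nontrivial rescaling. Hence one must compute the Mayer--Vietoris connecting homomorphism carefully against the explicit formula for $\alpha$, which is the genuine technical content of the proposition.
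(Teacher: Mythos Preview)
Your identification of the underlying dg-category is exactly the paper's argument: replace one leg by the interval category and take the strict pushout. Good.

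The gap is in the second half. Your Mayer--Vietoris sequence
\[
0 \longrightarrow \cat{HH}_1(k[x^{\pm 1}]) \longrightarrow \cat{HH}_0(k\coprod\bar k) \longrightarrow \cat{HH}_0(k)\oplus\cat{HH}_0(k)
\]
cannot be correct: by HKR, $\cat{HH}_1(k[x^{\pm1}])\simeq\Omega^1_{k[x^{\pm1}]}=k[x^{\pm1}]\,d_{dR}x$ is infinite-dimensional over $k$, so it certainly does not embed in $k\oplus k$. Hochschild homology does not satisfy Mayer--Vietoris for homotopy pushouts of dg-categories in general, and this example shows it fails here. Consequently your argument that the pushout class is pinned down as a generator of a one-dimensional space collapses, and with it the strategy for matching with $\alpha_1$.

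The paper sidesteps this entirely. Rather than appealing to any long exact sequence, it works directly in the cofibrant model $kI$ and observes that the Calabi--Yau pushout class is, by construction, the null-homotopy of the $0$-cycle $e_1-e_2$ in $\cat{HH}(kI)$. One then checks the single identity $b(\alpha_1)=e_1-e_2$ (the formula $\alpha_1=\tfrac12(x^{-1}\otimes x - x\otimes x^{-1})$ makes sense in $kI$). This pins down the class on the nose, with no scalar ambiguity. Also, since we are comparing absolute $1$-Calabi--Yau structures on $k[x^{\pm1}]$, the relevant uniqueness statement for cyclic lifts is Proposition~\ref{prop: lift}, not Theorem~\ref{theorem-astuce}.
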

\begin{proof}
First of all we introduce the interval dg-category $kI$: it is the $k$-linearization of the category $I=1\tilde\longrightarrow 2$ with two objects and an isomorphism $x$ between them. 
Observe that we have a factorization $k\coprod k\to kI\to k$, where the first functor is a cofibration (the inclusion into $kI$ of its subcategory of objects), and the second functor is a trivial fibration. 
Hence our homotopy push-out can be computed as the strict push-out $k\underset{k\coprod k}\coprod kI\simeq k[x^{\pm1}]$. We thus get the requested equivalence of dg-categories. 
It remains to prove that the $1$-Calabi--Yau structures coincide. Thanks to Proposition \ref{prop: lift}, it is sufficient to prove that the underlying Hochschild classes coincide. 

Finally, the $0$-cycle $e_1-e_2$ is homotopic to zero in the cofibrant replacement $kI$ of $k$: $e_1- e_2=b(\alpha_1)$, where $\alpha_1$ still makes sense for $kI$. 
\end{proof}

\subsection{Relative Calabi--Yau structures on evaluations $k[x^{\pm1}]\to k$}\label{subsec-eval}

The pull-back of the closed $1$-form $\alpha=d_{dR}\log(x)$ along any $k$-point 
\[
q:{\mathcal A}=k[x^{\pm1}]\longrightarrow k
\]
of $\mathbb{G}_m$ (i.e.~$q\in k^\times$) obviously vanishes. This tells us that the morphism $q$ is relative pre-Calabi--Yau in the 
sense of \cite{BCS}\footnote{We warn again the reader that pre-Calabi--Yau in the sense of~\cite{BCS} (see also~\cite{ToCY}) is the non-commutative analog of pre-symplectic, and differs from the pre-Calabi--Yau notion from~\cite{IKV} 
that is the non-commutative analog of a Poisson structure. }. 
\begin{lemma}\label{eval}
The above relative pre-Calabi--Yau structure is non-degenerate. 
\end{lemma}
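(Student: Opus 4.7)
The plan is to reduce the claim, via Theorem~\ref{theorem-astuce}, to the non-degeneracy of the underlying almost relative Calabi--Yau structure, and then compute everything explicitly using the small resolution $\mathrm{R}(\mathcal A)$ introduced in \ref{absnondeg}. Since both $\mathcal A$ and $k$ are smooth and concentrated in degree $0$, as soon as we know the almost structure is non-degenerate, the cyclic lift exists and is unique; it must then coincide with the one produced by the canonical null-homotopy $q^*(\alpha)=d_{dR}\log(q)=0$ together with the cyclic lift of $\alpha$ from the previous subsection. Concretely, with $\cat{A}=\varnothing$, $\cat{B}=\mathcal A$ and $\cat{C}=k$, non-degeneracy amounts to verifying that the induced sequence of $k$-modules
\[
k^\vee[1] \;\longrightarrow\; \mathcal A\overset{\mathbb{L}}{\underset{\mathcal A^e}{\otimes}} k^e \;\longrightarrow\; k
\]
is a homotopy fiber sequence.

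Next, I would compute the middle term using $\mathrm{R}(\mathcal A)=\mathcal A^e[1]\oplus\mathcal A^e$ as a cofibrant replacement. The differential $1\otimes 1\mapsto x\otimes 1-1\otimes x$ becomes $q-q=0$ upon tensoring over $\mathcal A^e$ with $k$ (viewed as an $\mathcal A^e$-module through $q\otimes q$), so
\[
\mathcal A\overset{\mathbb{L}}{\underset{\mathcal A^e}{\otimes}} k \;\simeq\; k[1]\oplus k
\]
with zero differential. Under this identification, the rightmost map of the sequence is the projection onto the second summand (induced by $\mathrm{R}(\mathcal A)\to\mathcal A\to k$), and hence its homotopy fiber is precisely $k[1]$, sitting in the first summand.

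Finally, I would identify the leftmost map. By the long exact sequence of relative Hochschild homology and the vanishing $\cat{HH}_{\geq 1}(k)=0$, the relative class $c_q\in\cat{HH}_2(k,\mathcal A)$ is the unique lift of $\alpha\in\cat{HH}_1(\mathcal A)$, and the map $k^\vee[1]\to\mathcal A\otimes^{\mathbb L}_{\mathcal A^e} k$ is the pushforward of $\alpha$ along the bimodule morphism $\mathcal A\to k$, $a\mapsto q(a)$. In the small-complex picture $\mathcal A[1]\oplus\mathcal A$ from \ref{absnondeg}, the class $\alpha$ is represented by $x^{-1}$ in degree $-1$, so its pushforward is $q^{-1}$ in the first summand of $k[1]\oplus k$. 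Since $q^{-1}\in k^\times$, the resulting map $k[1]\to k[1]\oplus k$ is an isomorphism onto the fiber of the augmentation, giving the desired fiber sequence.

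The main obstacle is the bookkeeping of the last step: matching the abstract map induced by the relative class $c_q$ with the pushforward of $\alpha$ in the small complex. This is exactly what \ref{absnondeg} makes transparent by computing the cap action of $\alpha$ as multiplication by $x^{-1}\otimes 1$, an operation that survives base change along $q$ precisely because $q$ is invertible; up to this identification the argument is a direct calculation.
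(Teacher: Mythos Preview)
Your proposal is correct and follows essentially the same route as the paper: base-change the small resolution $\mathrm{R}(\mathcal A)$ along $q$ to obtain $k[1]\oplus k$ with zero differential, identify the augmentation as the projection and the map from $k^\vee[1]$ as multiplication by $q^{-1}$ into the first summand, and conclude that the sequence is a fiber sequence. The only superfluous step is the opening appeal to Theorem~\ref{theorem-astuce}: the lemma asserts non-degeneracy of a pre-Calabi--Yau (i.e.\ cyclic) structure that is \emph{already} given, so no lifting argument is needed---but this does not affect the substance of your argument.
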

\begin{proof}
One first observes that both $(\mathrm{R}({\mathcal A})^\vee[1])\underset{{\mathcal A}^e}{\otimes}k^e$ and $\mathrm{R}({\mathcal A})\underset{{\mathcal A}^e}{\otimes}k^e$ are 
isomorphic to $k[1] \oplus k$ with zero differential. Moreover, after applying $\underset{{\mathcal A}^e}{\otimes}k^e$, the 
isomorphism \eqref{eqn:iso} becomes the multiplication by $q^{-1}$ on each component. 
Then recall that $k^\vee=k$, so that the morphism 
\[
k^\vee[1]=k[1]\to k[1]\oplus k =(\mathrm{R}({\mathcal A})^\vee[1])\underset{{\mathcal A}^e}{\otimes}k^e\,,
\quad\textrm{resp.}\quad
\mathrm{R}({\mathcal A})\underset{{\mathcal A}^e}{\otimes}k^e=k[1]\oplus k \to k
\]
is the obvious inclusion, resp.~projection. 
Hence the map 
\[
k^\vee[1]\longrightarrow \mathrm{fib}\left(\mathrm{R}({\mathcal A})\underset{{\mathcal A}^e}{\otimes}k^e\to k\right)
\]
identifies with the map 
\[
k[1]\overset{q^{-1}}{\longrightarrow} k[1]\simeq \mathrm{fib}\big(k[1]\oplus k\to k\big)\,.
\]
This proves the non-degeneracy. 
\end{proof}

\subsection{A Calabi--Yau cospan}

Our aim is to prove that the cospan 
\begin{equation}\label{equation-cospan}
k[x^{\pm1}]\coprod k[y^{\pm1}] \longrightarrow k\langle x^{\pm1},y^{\pm1}\rangle  \longleftarrow k[z^{\pm1}]\,,
\end{equation}
where the rightmost map is $z\mapsto xy$, is relative Calabi--Yau in the sense of \cite{BCS}.

Set $\beta_1=\frac{1}{2}(y^{-1}\otimes x^{-1}\otimes xy-y\otimes y^{-1}x^{-1}\otimes x)$, which satisfies\[
\alpha_1(xy)-(\alpha_1(x)+\alpha_1(y))=b(\beta_1).
\]

\begin{lemma}\label{lemma: CY}
The above homotopy $\beta_1$ is non-degenerate, and thus defines an almost $1$-Calabi--Yau structure on the cospan \eqref{equation-cospan}. This almost $1$-Calabi--Yau structure lifts uniquely to a $1$-Calabi--Yau structure. 
\end{lemma}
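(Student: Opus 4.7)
The lemma has two parts: non-degeneracy of $\beta_1$ (the substantive content), and the existence and uniqueness of a cyclic lift (formal). The second part is handled by Theorem~\ref{theorem-astuce}. All four dg-algebras appearing in the cospan --- $k[x^{\pm 1}]$, $k[y^{\pm 1}]$, $k[z^{\pm 1}]$, and the smooth free algebra $k\langle x^{\pm 1}, y^{\pm 1}\rangle$ --- are concentrated in degree zero, so the same spectral sequence argument from the proof of that theorem applies verbatim to the relative Hochschild complex of the cospan and yields a unique cyclic lift once the almost Calabi--Yau structure is established. The real work is therefore to prove non-degeneracy.

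For non-degeneracy I would follow the strategy of Section~\ref{absnondeg} and replace the bar resolutions by small geometric bimodule resolutions throughout. Write $\mathcal{A} = k[x^{\pm 1}]$, $\mathcal{B} = k[y^{\pm 1}]$, $\mathcal{D} = k[z^{\pm 1}]$, and $\mathcal{C} = k\langle x^{\pm 1}, y^{\pm 1}\rangle$. Each of $\mathcal{A}, \mathcal{B}, \mathcal{D}$ admits the two-term resolution $\mathrm{R}$ introduced in that section; for the free algebra $\mathcal{C}$ use the natural two-variable analogue
$$\mathrm{R}(\mathcal{C}) = \mathcal{C}^e[1]\cdot e_x \,\oplus\, \mathcal{C}^e[1]\cdot e_y \,\oplus\, \mathcal{C}^e,$$
whose differential sends $e_x \mapsto x\otimes 1 - 1\otimes x$ and $e_y \mapsto y\otimes 1 - 1\otimes y$; its dual $\mathrm{R}(\mathcal{C})^\vee$ carries the transposed differential. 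The comparison with the bar resolution is the free-algebra analogue of HKR, sending $\bar a$ to $\partial_x(a)\cdot e_x + \partial_y(a)\cdot e_y$ via the non-commutative partial derivatives.

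In these models the non-degeneracy square becomes an explicit diagram of bounded complexes of free $\mathcal{C}^e$-modules: each $\mathcal{X}\otimes_{\mathcal{X}^e}\mathcal{C}^e$ for $\mathcal{X}\in\{\mathcal{A},\mathcal{B},\mathcal{D}\}$ becomes two-term, with differential the commutator with $x$, $y$, or $xy$ respectively, while $\mathcal{C}^\vee[1]$ becomes three-term. Non-degeneracy reduces to showing that the chain map
$$\mathcal{C}^\vee[1] \longrightarrow \mathrm{fib}\Bigl(\bigoplus_{\mathcal{X}} \mathcal{X}\otimes_{\mathcal{X}^e}\mathcal{C}^e \longrightarrow \mathcal{C}\Bigr)$$
induced by the absolute classes $\alpha_1(x), \alpha_1(y), \alpha_1(z)$ together with the homotopy $\beta_1$ is a quasi-isomorphism. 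Both sides are then two-term complexes of free $\mathcal{C}^e$-modules, so this is a finite linear-algebra check.

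The main obstacle is the bookkeeping in that last step: the dual differentials in $\mathrm{R}(\mathcal{C})^\vee$ couple the two generators $e_x, e_y$, so one must expand $\beta_1 = y^{-1}\otimes x^{-1}\otimes xy - y\otimes y^{-1}x^{-1}\otimes x$ carefully through the HKR-type comparison and verify, via Leibniz and the identities $\partial_x(y^{-1}x^{-1}) = -y^{-1}x^{-1}\otimes x^{-1}$, $\partial_y(y^{-1}x^{-1}) = -y^{-1}\otimes y^{-1}x^{-1}$, that the resulting $2\times 2$ matrix over $\mathcal{C}^e$ is upper-triangular with invertible diagonal entries $x^{-1}\otimes 1$ and $y^{-1}\otimes 1$ --- mirroring the absolute computation of Section~\ref{absnondeg}. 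Once this check succeeds, the image of $\beta_1$ gives the desired cartesian square, and Theorem~\ref{theorem-astuce} completes the proof.
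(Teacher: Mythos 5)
Your proposal is correct and follows essentially the same route as the paper: both reduce non-degeneracy to an explicit computation with the small two-term bimodule resolutions of $k[x^{\pm1}]$, $k[y^{\pm1}]$, $k[z^{\pm1}]$ and the three-term resolution of $k\langle x^{\pm1},y^{\pm1}\rangle$, identify the homotopy fiber of the induced map of free $\mathcal{C}^e$-complexes with $\mathcal{C}^\vee[1]$, and then invoke Theorem~\ref{theorem-astuce} for the existence and uniqueness of the cyclic lift. If anything, you are slightly more explicit than the paper about the final matrix check that it leaves implicit.
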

\begin{proof}
As a preliminary observation, let us recall on the one hand that ${\mathcal B}:=k\langle x^{\pm1},y^{\pm1}\rangle$ also has a 
small resolution as a ${\mathcal B}$-bimodule: 
\[
\mathrm{R}({\mathcal B})=({\mathcal B}^e)^{\oplus2}[1] \oplus {\mathcal B}^e
\]
with differential sending $(1\otimes1,0)$ to $x\otimes 1-1\otimes x$, and $(0,1\otimes1)$ to $y\otimes 1-1\otimes y$. 
Therefore 
\[
\mathrm{R}({\mathcal B})^\vee={\mathcal B}^e\oplus ({\mathcal B}^e)^{\oplus2}[-1]
\]
with differential sending $1\otimes 1$ to $(x\otimes 1-1\otimes x,y\otimes 1-1\otimes y)$. 

As the maps $\alpha_1(xy)$ and $\alpha_1(x)+\alpha_1(y)$ are homotopic via $\beta_1$, the following diagram is homotopy commutative
\[
\xymatrix{ {\mathcal B}^\vee[1] \ar[d]\ar[r] &  {\mathcal A}^\vee \underset{{\mathcal A}^e}{\otimes}{\mathcal B}^e[1] \overset{\alpha_1(xy)}{\simeq}  {\mathcal A}\underset{{\mathcal A}^e}{\otimes}{\mathcal B}^e \ar[d]  \\
( {\mathcal A}^{\oplus2})^\vee \underset{{\mathcal A}^e}{\otimes} {\mathcal B}^e [1] \overset{\alpha_1(x) +\alpha_1(y)}{\simeq} {\mathcal A}^{\oplus 2} \underset{{\mathcal A}^e}{\otimes}{\mathcal B}^e\ar[r] & {\mathcal B}
}
\]
where ${\mathcal A}= k[x^{\pm1}]$.
Following~\S\ref{absnondeg},  ${\mathcal A}\underset{{\mathcal A}^e}{\otimes}{\mathcal B}^e \simeq {\mathcal B}^e[1] \oplus {\mathcal B}^e$, with differential sending $1\otimes1$ to $x \otimes 1 -1\otimes x$. 
Hence, we get that the fibre of the map 
\[ ({\mathcal B}^e[1])^{\oplus 3}  \oplus ({\mathcal B}^e)^{\oplus 3} \to ({\mathcal B}^e)^{\oplus2}[1] \oplus {\mathcal B}^e\]
induced by $\alpha_1(xy)- \alpha_1(x) + \alpha_1(y)$  is isomorphic to $\mathrm{R}({\mathcal B})^\vee[1]={\mathcal B}^e[1]\oplus ({\mathcal B}^e)^{\oplus2}$. 
Then, using Theorem \ref{theorem-astuce}, we get that $\beta_1$ lifts to a unique homotopy $\beta$ between $\alpha(xy)$ and $\alpha(x)+\alpha(y)$. 
Therefore the cospan \eqref{equation-cospan} carries a  $1$-Calabi--Yau structure. Below we give an alternative presentation of this cospan. 
\end{proof}

\subsubsection{Another description of the Calabi--Yau cospan}\label{subsection: pants}

Observe that we have the following (strict) commuting diagram in the category $(\cat{Cat}_k^{sm})_{k/\cat{HC}^-}$ of smooth dg-categories equipped with a negative cyclic $0$-cycle (in order to lighten the notation, we omit coproducts): 
\[
\xymatrix{
&{\begin{matrix}k & k \end{matrix}}\ar[d]&&{\begin{matrix}k \\ k\end{matrix}}\ar[d] & \\
\varnothing \ar[ru]\ar[rd]& {\begin{matrix}k & k \end{matrix}} & {\begin{matrix}k & \bar{k} \\ \bar{k} & k\end{matrix}} \ar[ur]\ar[ul]\ar[dr]\ar[dl]& k &\varnothing \ar[lu]\ar[ld]\\
&{\begin{matrix}k & k \end{matrix}}\ar[u]&&{\begin{matrix}k & k \end{matrix}}\ar[u]&
}
\]
It admits a replacement by a (homotopy coherent) commuting diagram in the $\infty$-category $(\scat{Cat}_k^{sm})_{k/\cat{HC}^-}$: 
\[
\xymatrix{
&{\begin{matrix}kI & k\bar{I} \end{matrix}}\ar[d]&&{\begin{matrix}k \\ k\end{matrix}}\ar[d] & \\
\varnothing \ar[ru]\ar[rd]& {\begin{matrix}kI & k\bar{I} \end{matrix}} & {\begin{matrix}k & \bar{k} \\ \bar{k} & k\end{matrix}} \ar[ur]\ar[ul]\ar[dr]\ar[dl]& k & \varnothing \ar[lu]\ar[ld]\\
&{\begin{matrix}kI & k\bar{I} \end{matrix}}\ar[u]&&{\begin{matrix}k & k \end{matrix}}\ar[u]&
}
\]
Observe that the above diagram strictly commutes in $\cat{Cat}_k$, but that the negative cyclic $0$-cycles only match up to homotopy. 

By composing horizontal cospans we obtain a new (homotopy) commuting diagram in the $\infty$-category $(\scat{Cat}_k^{sm})_{k/\cat{HC}^-}$: 
\[\xymatrix{
& k[z^{\pm1}] \ar[d] & \\
\varnothing \ar[ur]\ar[dr] & k\langle x^{\pm1},y^{\pm1}\rangle & \varnothing \ar[ul]\ar[dl]\\
& k[x^{\pm1}] \coprod k[y^{\pm1}]  \ar[u]&
}
\]
One can finally see that the vertical cospan of dg-categories coincides with \eqref{equation-cospan}. Additionnaly, we have $1$-cycles $c_z$ and $c_{x,y}$ in $\cat{HC}^-(k[z^{\pm1}])$ and $\cat{HC}^-(k[x^{\pm1}]\coprod k[y^{\pm1}])$, respectively, 
together with a homotopy $c_{x,y,z}$ between their images in $ \cat{HC}^-(k\langle x^{\pm1},y^{\pm1}\rangle)$. 
\begin{proposition}
The triple $(c_{x,y},c_z,c_{x,y,z})$ defines a $1$-Calabi--Yau structure on \eqref{equation-cospan}, that coincides with the one from Lemma~\ref{lemma: CY}. 
\end{proposition}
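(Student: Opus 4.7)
The proposition bundles two claims: first, that the triple $(c_{x,y}, c_z, c_{x,y,z})$ assembled from the replacement diagram is in fact a $1$-Calabi--Yau structure on \eqref{equation-cospan}; and second, that this structure agrees with the one constructed in Lemma~\ref{lemma: CY}. The plan is to split the argument accordingly: verify the Calabi--Yau property by reducing it to a composition of elementary relative Calabi--Yau cospans (via \cite[Theorem 6.2]{BD1}), and then match the two structures by comparing their underlying almost Calabi--Yau data and invoking the uniqueness of cyclic lifts from Theorem~\ref{theorem-astuce}.

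For the first claim, I would identify the elementary building blocks of the replacement diagram. Each $k$ and $\bar k$ carries the evident $0$-Calabi--Yau structure, and the cospans of the form $k\amalg\bar k\to kI\leftarrow\varnothing$ (together with their orientation-reversed mirrors) are $0$-Calabi--Yau: $kI$ is weakly equivalent to $k$, and the relative Hochschild $1$-class is controlled by the identity $e_1-e_2=b(\alpha_1)$ already used in the proof of Proposition~\ref{prop: CYpushout}. Horizontal composition of these elementary pieces along each row recovers $k[z^{\pm1}]$, $k\langle x^{\pm1},y^{\pm1}\rangle$ and $k[x^{\pm1}]\coprod k[y^{\pm1}]$ in the top, middle and bottom rows respectively, each equipped with a Calabi--Yau structure of the appropriate relative type. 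Vertical composition of the resulting cospans then produces the required $1$-Calabi--Yau structure $(c_{x,y},c_z,c_{x,y,z})$ on \eqref{equation-cospan}.

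For the second claim, both candidate structures are negative cyclic lifts of almost $1$-Calabi--Yau structures on the same cospan of smooth dg-categories, all concentrated in degree zero. On the absolute parts, Proposition~\ref{prop: CYpushout} identifies $c_z^\natural$ and $c_{x,y}^\natural$ with $\alpha_1(z)$ and $\alpha_1(x)+\alpha_1(y)$, which are exactly the absolute classes underlying the structure of Lemma~\ref{lemma: CY}. For the relative Hochschild homotopy in $k\langle x^{\pm1},y^{\pm1}\rangle$, one unpacks the pushout model $k\langle x^{\pm1},y^{\pm1}\rangle\simeq (kI\amalg k\bar I)\sqcup k$ along the four-component gluing and traces the $\alpha_1$'s through the composition; the output represents the same relative Hochschild class as $\beta_1=y^{-1}\otimes x^{-1}\otimes xy-y\otimes y^{-1}x^{-1}\otimes x$. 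Once the two underlying almost Calabi--Yau structures are shown to coincide, Theorem~\ref{theorem-astuce} applied to the morphism $k[x^{\pm1}]\coprod k[y^{\pm1}]\to k\langle x^{\pm1},y^{\pm1}\rangle$ forces the full cyclic lifts to coincide as well.

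The main obstacle is the explicit chain-level bookkeeping that identifies the Hochschild homotopy produced by the iterated pushout of $\alpha_1$'s with $\beta_1$. This can be organised via the small resolutions $\mathrm{R}(\mathcal{A})$ and $\mathrm{R}(\mathcal{B})$ introduced in \S\ref{absnondeg}, reducing the verification to a finite computation in the small Hochschild complex, where both candidates project to the same class. Once this identification is in place, all remaining ingredients---composability of Calabi--Yau cospans, non-degeneracy of the absolute classes, and uniqueness of cyclic lifts---are already established earlier in the paper.
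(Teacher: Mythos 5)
Your proposal is correct and follows essentially the same route as the paper: match the absolute classes via Proposition~\ref{prop: CYpushout}, reduce the comparison of cyclic lifts to the underlying Hochschild data via Theorem~\ref{theorem-astuce} (and Proposition~\ref{prop: lift}), and identify the relative homotopy with $\beta_1$. The only point where the paper is sharper is the ``subtlety'' that the strict top pushout is the two-object category $\mathcal{C}$ rather than $k[z^{\pm1}]$ itself, so that identifying $c_z^\natural$ with $\alpha_1(z)$ already uses the boundary $b(\beta_1)$ --- which is precisely why the chain-level bookkeeping you defer to the end is in fact already done: the formula for $\beta_1$ makes sense in $\mathcal{C}$ and is the homotopy produced by the composition.
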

\begin{proof}
As we have already seen in Proposition~\ref{prop: CYpushout}, the $1$-Calabi--Yau structures on $k[x^{\pm1}]\coprod k[y^{\pm1}]$ match up: $c_{x,y}\sim\alpha(x)+\alpha(y)$. 

They also match on $k[z^{\pm1}]$, but there is a subtlety that is worth noticing. As usual, according to the uniqueness of cyclic lifts from Proposition~\ref{prop: lift}, in order to prove that $c_z\sim\alpha(z)$ 
it is sufficient to prove that $c_z^\natural\sim\alpha(z)^\natural=\alpha_1(z)$. Now, computed strictly, the top horizontal push-out gives the $k$-linearization ${\mathcal C}$ of a category with 
two objects $1,2$ and two isomorphisms $x:1\tilde\to 2$ and $y:2\tilde\to 1$. Of course, we have an equivalence $k[z^{\pm1}]\tilde\to {\mathcal C}$, sending $z$ to $xy$. Following a similar calculation as in the proof of 
Proposition~\ref{prop: CYpushout}, we get on ${\mathcal C}$ the Hochschild $1$-cycle $\alpha_1(x)+\alpha_1(y)$. Up to a Hochschild boundary, this matches up with the image of $\alpha_1(z)$ through the equivalence given by $z\mapsto xy$. 
Indeed, the formula for the homotopy $\beta_1$ still makes sense in ${\mathcal C}$. 

It remains to prove that the homotopy $c_{x,y,z}$ matches with $\beta$. As the underlying Hochschild homotopy $\beta_1=\beta^{\natural}$ is non-degenerate (thanks to Lemma~\ref{lemma: CY}), 
according to the uniqueness of cyclic lifts from Theorem~\ref{theorem-astuce}, it suffices to prove that the underlying Hochschild homotopies $c_{x,y,z}^\natural$ and $\beta^\natural=\beta_1$ coincide. 
We already proved it, as $\beta_1$ is the homotopy that identifies $\alpha_1(x)+\alpha_1(y)$ with $\alpha_1(z)$ in ${\mathcal C}$. 
\end{proof}


\begin{remark}\label{remark-conjecturale}
Let us put what we have done so far in a more general perspective, by first recalling from~\cite[\S5.1]{BD1} that for a closed oriented $d$-manifold $M$, the $k$-linearization $\mathfrak{L}(M):=\cat{dg}(\cat{Sing}(M))$ of the fundamental $\infty$-groupoid of $M$ 
carries a $d$-Calabi--Yau structure. Moreover, in \textit{loc.~cit.} the authors also prove that if $N$ is a compact oriented $(d+1)$-manifold with boundary $\partial N=M$, then one gets a $d$-Calabi--Yau structure on the natural functor 
$\mathfrak{L}(M)\to \mathfrak{L}(N)$. We conjecture the existence of a symmetric monoidal $(\infty,n)$-category $\scat{CY}_n^s$ of $n$-iterated $s$-Calabi--Yau cospans, similar to the iterated category of lagrangian correspondences sketched 
in~\cite{CalTFT}, and rigoroulsy constructed in~\cite{CHS}. We also conjecture that the functor $\cat{dg}(\cat{Sing}(-))$ leads to a fully extended oriented TFT in every dimension: i.e.~it should admit an upgrade to a symmetric monoidal $(\infty,n)$-functor 
\[
\scat{Bord}_n^{or}\longrightarrow \scat{CY}_n^0
\]
for every $n$ (in particular, $k$ is $n$-dualizable in $\scat{CY}_n^0$). 
For the above presentation of the Calabi--Yau cospan structure on $k\langle x^{\pm1},y^{\pm1}\rangle$, we took inspiration from a construction of the pair-of-pants as a suitable composition of $2$-iterated oriented bordisms (see 
subsection~\ref{subsec-fusionpants} below, where this decomposition of the pair-of-pants is made explicit), and guessed the diagram one shall write by pretending that the conjecture was known. 
\end{remark}

\begin{remark}
The Calabi--Yau push-out of this cospan with the evaluation Calabi--Yau morphism $q:k[z^{\pm1}]\to k$ from~\S\ref{subsec-eval} gives the Calabi--Yau cospan associated with the Calabi--Yau isomorphism $k[x^{\pm1}]\to k[y^{\pm1}]$ given by $x\mapsto q^{-1}y$ (see Remark~\ref{rem:inv}). 
More precisely, the push-out gives a morphism 
\[
k[x^{\pm1}]\coprod k[y^{\pm1}] \longrightarrow k\langle x^{\pm1},y^{\pm1}\rangle/(xy=q)\,,
\]
under which the image of $\alpha_1(x)+\alpha_1(y)$ is identically $0$. Then using that $inv$ gives an isomorphism between the Calabi--Yau structure and its inverse on $k[x^{\pm1}]$ (see Remark~\ref{rem:inv}, again), we obtain the desired Calabi--Yau cospan from $k[x^{\pm1}]$ to $k[y^{\pm1}]$. 
\end{remark}


\section{Multiplicative preprojective algebras}\label{section-preproj}

Consider a quiver $Q$, which consists in a vertex set $V$, and an oriented edge set $E$: to each edge $e$ we associate a source $s(e)$ and a target $t(e)$ in $V$. We consider its double version $\overline Q=(V,\overline E=E\sqcup E^*)$, where $E^*$ consists in reverse arrows $e^*:t(e)\to s(e)$, and extend ${}^*$ in an involution of $\overline E$ by setting $e^{**}=e$ for every $e\in E$. We also set $\epsilon(e)=1$ and $\epsilon(e^*)=-1$ for all $e\in E$.
As mentioned in the introduction, 
Crawley-Boevey and Shaw introduced in~\cite{CBS}
the \emph{multiplicative preprojective algebra} $\Lambda^q(Q)$, where $q\in (k^*)^V$. It is given as the quotient of a localization of $k\overline Q$ by the relation\[
\prod_{e\in\overline E}(1+ee^*)^{\epsilon(e)}-\sum_{v\in V}q_ve_v\]
where $e_v$ denotes the length $0$ idempotent path at $v$.
It is thus required to invert all $1+ee^*$ for $e\in E^*$, which actually amounts to inverting $1+ee^*$ for all $e\in\overline E$. We denote $k\overline{Q}_{loc}$ the localization of $k\overline{Q}$ with respect to these elements. 

The definition of $\Lambda^q(Q)$ a priori requires an ordering on $\overline E$, but the resulting quotient actually doesn't depend on it (up to isomorphism~\cite[Theorem 1.4]{CBS}).

\begin{remark}\label{setquivfree}
We can either view $\Lambda^q(Q)$ as an algebra, or as a category (with objects the vertices of $Q$, that correspond to the idempotents of $\Lambda^q(Q)$). 
There is a Morita morphism from one to another, so that it doesn't matter for what we do (see~\cite[Remark 5.4]{BCS}).  
\end{remark}

\subsection{Relative Calabi--Yau structure for the $A_2$ quiver}

Consider the quiver $A_2=(V=\{1,2\},E=\{e:1\to2\})$, with orthogonal idempotents $e_1$ and $e_2$ satisfying $1=e_1+e_2$, and write \[a_1=e_1+e^*e\text{ and }a_2=e_2+ee^*.\]
Note that\[
1+e^*e\text{ invertible}\Leftrightarrow a_1 \text{ invertible}\Leftrightarrow a_2 \text{ invertible}\Leftrightarrow 1+ee^*\text{ invertible},\]
in which case \begin{align}\label{invrels}\begin{split}
(1+e^*e)^{-1}&=e_2+a_1^{-1}\\
a_2^{-1}&=e_2-ea_1^{-1}e^*\\
a_1^{-1}&=e_1-e^*a_2^{-1}e\\
(1+ee^*)^{-1}&=e_1+a_2^{-1}.\end{split}\end{align}
Thus in the $A_2$ case, the product in the multiplicative preprojective relation reads
\[
(1+ee^*)(1+e^*e)^{-1}=a_2+a_1^{-1}=(1+e^*e)^{-1}(1+ee^*).\]
Denote by ${\mathcal B}$ the localization $k\overline{A_2}[a_{1}^{-1},a_2^{-1}]$, and define morphisms $\mu_i:k[x_i^{\pm1}]\rightarrow {\mathcal B}$, $i\in\{1,2\}$, by setting\[
\mu_1(x_1)=a_1^{-1}\quad \text{and}\quad \mu_2(x_2)=a_2.
\]
Equalities~\eqref{invrels} further imply \begin{align}\label{invrels2}\begin{split}
a_2^{-1}e&=ea_1^{-1}\\
e^*a_2^{-1}&=a_1^{-1}e^*\\
e^*ea_1^{-1}&=e_1-a_1^{-1}\\
ee^*a_2^{-1}&=e_2-a_2^{-1}.\end{split}\end{align}

\subsubsection{The homotopy}

Note that $\mu_1$ maps $\alpha_1$ to $\frac{1}{2}(a_1\otimes a_1^{-1}-a_1^{-1}\otimes a_1)$, and $\mu_2$ maps $\frac{1}{2}(\alpha_1$ to $a_2^{-1}\otimes a_2-a_2\otimes a_2^{-1})$ where tensor products are performed over the algebra ${\mathcal R}=\oplus_{v\in V}ke_v$.
Thus\begin{align*}
2(\mu_1(\alpha_1)+\mu_2(\alpha_1))&=e^*e\otimes a_1^{-1}-a_1^{-1}\otimes e^*e+a_2^{-1}\otimes ee^*-ee^*\otimes a_2^{-1}\\
&\qquad\qquad+e_1\otimes a_1^{-1}-a_1^{-1}\otimes e_1+a_2^{-1}\otimes e_2-e_2\otimes a_2^{-1}\\
&=e^*e\otimes a_1^{-1}-a_1^{-1}\otimes e^*e+a_2^{-1}\otimes ee^*-ee^*\otimes a_2^{-1}\\
&\qquad\qquad+1\otimes (a_1^{-1}- a_2^{-1})
\end{align*}
as $a_i^{-1}\otimes e_i=a_i^{-1}\otimes1=0$ in the normalized Hochschild complex.
Direct computations, helped by~\eqref{invrels2}, show that \[
e^*e\otimes a_1^{-1}-a_1^{-1}\otimes e^*e+a_2^{-1}\otimes ee^*-ee^*\otimes a_2^{-1}\]
 is the image under $b$ of\[	
e^*\otimes e\otimes a_1^{-1}+a_1^{-1}\otimes e^*\otimes e- e^*\otimes a_2^{-1}\otimes e-a_2^{-1}\otimes e\otimes e^*.\]
Also,\begin{align*}
1\otimes (a_1^{-1}- a_2^{-1})&=1 \otimes (a_1^{-1}-e_1-a_2^{-1}+e_2)&&[\text{normalization}]\\
&=1\otimes (ee^*a_2^{-1}-e^*ea_1^{-1})&&[\eqref{invrels2}]\\
&=-Bb(e^*\otimes ea_1^{-1})&&[\eqref{invrels2}]\\
&=bB(e^*\otimes e a_1^{-1}).
\end{align*}
Hence $\mu_1(\alpha_1)+\mu_2(\alpha_1)$ is the image under $b$ of \begin{align}\label{homotopyA2}\begin{split}
\beta_1&=\frac{1}{2}\left(e^*\otimes e\otimes a_1^{-1}+a_1^{-1}\otimes e^*\otimes e- e^*\otimes a_2^{-1}\otimes e-a_2^{-1}\otimes e\otimes e^*+B(e^*\otimes e a_1^{-1})\right)\\
&=\frac{1}{2}\left(e^*\otimes e\otimes \mu+\mu\otimes e^*\otimes e- e^*\otimes \mu^{-1}\otimes e-\mu^{-1}\otimes e\otimes e^*\right.\\
&\qquad\qquad\left.+1\otimes e^*\otimes e \mu-1\otimes \mu^{-1}e\otimes e^*\right)\end{split}\end{align}
if $\mu=\mu_1(x_1)+\mu_2(x_2)$.

\subsubsection{Non-degeneracy}

  \begin{proposition}\label{a2nondeg}
  The cospan $\mu_1\amalg-\mu_2$ carries an almost $1$-Calabi--Yau structure, that lifts uniquely to a $1$-Calabi--Yau structure thanks to Theorem~\ref{theorem-astuce}.
   \end{proposition}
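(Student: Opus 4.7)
My plan is to establish non-degeneracy of the homotopy $\beta_1$ by an explicit bimodule-resolution computation, after which the unique cyclic lift is handed to us by Theorem~\ref{theorem-astuce}. The ingredients on the source side (the $1$-Calabi--Yau class $\alpha_1(x_i)$ on each $k[x_i^{\pm1}]$) were already shown to be non-degenerate in \S\ref{absnondeg}, so only the relative half of the structure needs attention.

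First I would write down a compact bimodule resolution $\mathrm{R}({\mathcal B})$ of ${\mathcal B}$. Since ${\mathcal B}$ is a localization of the path algebra of $\overline{A_2}$, the standard quiver resolution descends to ${\mathcal B}^e$: it has ${\mathcal B}e_1\otimes e_1{\mathcal B}\oplus {\mathcal B}e_2\otimes e_2{\mathcal B}$ in degree $0$ and ${\mathcal B}e_1\otimes e_2{\mathcal B}\oplus {\mathcal B}e_2\otimes e_1{\mathcal B}$ in degree $-1$ (one summand per arrow in $\overline{A_2}$), with differential sending the generator attached to an arrow $a$ to $a\otimes 1-1\otimes a$. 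This also simultaneously shows that ${\mathcal B}$ is smooth, and it gives a concrete description of $\mathrm{R}({\mathcal B})^\vee$. Pulling back each small resolution of ${\mathcal A}_i=k[x_i^{\pm1}]$ along $\mu_i$ and tensoring with ${\mathcal B}^e$ yields a two-term complex ${\mathcal B}^e[1]\oplus {\mathcal B}^e$ with differential involving $a_1$, resp.\ $a_2$; under the identifications $\mathrm{R}({\mathcal A}_i)^\vee[1]\simeq\mathrm{R}({\mathcal A}_i)$ furnished by $\alpha_1(x_i)$, these become multiplication by $a_1^{-1}\otimes 1$ and $1\otimes a_2^{-1}$ respectively.

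The heart of the proof is then to verify that the homotopy-commuting square associated with $\beta_1$ in~\eqref{homotopyA2} is cartesian, i.e.\ that $\beta_1$ induces an equivalence ${\mathcal B}^\vee[1]\simeq\mathrm{fib}\bigl(({\mathcal A}_1\coprod {\mathcal A}_2)\otimes_{({\mathcal A}_1\coprod{\mathcal A}_2)^e}{\mathcal B}^e\to{\mathcal B}\bigr)$. Reading off each of the six summands of $\beta_1$, I would match the terms $e^*\otimes e\otimes\mu$ and $\mu\otimes e^*\otimes e$ (together with the two analogues involving $\mu^{-1}$) against the degree-$0$ generators of $\mathrm{R}({\mathcal B})^\vee[1]$ attached to the arrows $e$ and $e^*$, while the Connes-boundary terms $1\otimes e^*\otimes e\mu-1\otimes \mu^{-1}e\otimes e^*$ take care of the degree-$(-1)$ components indexed by the vertices. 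The identities~\eqref{invrels2} (which witness the commutation of $e,e^*$ with $a_i^{\pm1}$) ensure that all the various cross-contributions cancel correctly, so that the resulting map from ${\mathcal B}^\vee[1]$ to the fiber is nothing but componentwise multiplication by invertible elements of ${\mathcal B}^e$, hence an isomorphism of complexes.

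The main obstacle is purely combinatorial bookkeeping for this comparison: one must be attentive to signs, to the tensor factor on which each $a_i$ or $a_i^{-1}$ acts, and to the order in which the relations~\eqref{invrels} and~\eqref{invrels2} are applied, since the $\mathcal{B}^e$-module structure is non-commutative. Once non-degeneracy is verified, both source and target are smooth dg-categories concentrated in degree zero, so Theorem~\ref{theorem-astuce} applies verbatim to upgrade the almost $1$-Calabi--Yau structure to a unique $1$-Calabi--Yau structure.
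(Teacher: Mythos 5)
Your strategy is the same as the paper's: pull back the small resolutions of the $k[x_i^{\pm1}]$ along the $\mu_i$, resolve $\mathcal{B}$ by the Schofield-type complex $\Omega^1(\mathcal{B})\to\mathcal{B}^e$ with $\Omega^1(\mathcal{B})\simeq\mathcal{B}\otimes_{\mathcal R}k\overline{E}\otimes_{\mathcal R}\mathcal{B}$, check by hand that $\beta_1$ from~\eqref{homotopyA2} induces an equivalence $\mathcal{B}^\vee[1]\simeq\mathrm{fib}(u)$ using the relations~\eqref{invrels2}, and then invoke Theorem~\ref{theorem-astuce} for the existence and uniqueness of the cyclic lift. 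The set-up (which summands of $\mathrm{R}(\mathcal{B})^\vee[1]$ sit in which degree, the role of the Connes-boundary terms of $\beta_1$ for the vertex components) is also consistent with what the paper does.

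The gap is in the one step you defer to ``combinatorial bookkeeping'': the assertion that the comparison map comes out as \emph{componentwise multiplication by invertible elements of $\mathcal{B}^e$} is not what the computation produces, and as stated it cannot be. What $\beta_1$ actually yields is a degree $-1$ homotopy $h$ on $\mathcal{B}\otimes_{\mathcal R}k\overline{E}\otimes_{\mathcal R}\mathcal{B}$ given by \emph{two-sided} expressions, namely $h(1\otimes e\otimes 1)=a_2^{-1}\otimes e\otimes 1-1\otimes e\otimes a_1^{-1}$ and $h(1\otimes e^*\otimes 1)=1\otimes e^*\otimes a_2^{-1}-a_1^{-1}\otimes e^*\otimes 1$. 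Such an operator is a difference of a left and a right multiplication; it is not multiplication by a unit of $\mathcal{B}^e$, and indeed it is not even invertible on the arrow component (by~\eqref{invrels2} its image lies in the kernel of the multiplication map $\mathcal{B}e_2\otimes e_1\mathcal{B}\to\mathcal{B}$). The isomorphism $\mathcal{B}^\vee[1]\xrightarrow{\sim}\mathrm{fib}(u)$ is an equivalence of \emph{total} complexes assembled from $h$ together with the maps $f$, $\tau$ and the identification $m$ coming from $\alpha_1$, verified by checking that $fm\tau^\vee=h\,d'^\vee$ and $d'h=\tau m f^\vee$; it cannot be read off summand by summand. (Relatedly, your formulas ``$a_1^{-1}\otimes 1$ and $1\otimes a_2^{-1}$'' for the identifications induced by $\alpha_1(x_i)$ do not track the substitutions $x_1\mapsto a_1^{-1}$, $x_2\mapsto a_2$ and the sign in $-\mu_2$ correctly.) So the plan is the right one, but the decisive verification still has to be carried out, and its outcome is structurally different from what you predict.
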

  \begin{proof} 
 Set ${\mathcal A}= k[x_1^{\pm1}] \amalg k[x_2^{\pm1}]$ and $u=\mu_1\amalg-\mu_2$. Thanks to the existence of the homotopy $\beta_1$ given by~\eqref{homotopyA2}, the following diagram homotopy commutes: 
 \begin{equation}\label{hocoA2}
\xymatrix{ {\mathcal B}^\vee [1] \ar[r]^-{u^\vee[1]} \ar[d]& {\mathcal A}^\vee [1]   \underset{{\mathcal A}^e}{\otimes} {\mathcal B}^e  \ar[r] \ar[d]^-{\alpha_1 \underset{{\mathcal A}^e}{\otimes} {\mathcal B}^e} &\mathrm{cofib}(u^\vee[1])\ar[d] \\
\mathrm{fib}(u) \ar[r]& {\mathcal A} \underset{{\mathcal A}^e}{\otimes} {\mathcal B}^e  \ar[r]^-{u}& {\mathcal B} } 
\end{equation}
To show the non-degeneracy, we need to prove that the vertical maps in \eqref{hocoA2} are isomorphisms. Since ${\mathcal A}$ is $1$-Calabi--Yau, it is sufficient to prove that the leftmost vertical map is an isomorphism. 
Set ${\mathcal A}_i=k[x_i^{\pm1}]$, and ${\mathcal B}_i^e={\mathcal A}_i \underset{{\mathcal A_i}^e}{\otimes} {\mathcal B}^e$ induced by $\mu_i$.
Using the resolutions from~\S\ref{absnondeg}, we can replace $ {\mathcal A}  \underset{{\mathcal A}^e}{\otimes} {\mathcal B}^e$ with the complex 
\[({\mathcal B}^e_1\oplus {\mathcal B}_2^e)[1] \oplus  ({\mathcal B}_1^e\oplus {\mathcal B}_2^e)\]
 with differential \[
 d:(p_1\otimes q_1,p_2\otimes q_2) \mapsto (p_1a_1^{-1}\otimes q_1-p_1\otimes a_1^{-1}q_1,p_2a_2\otimes q_2-p_2\otimes a_2q_2)\] 
 where $p_i,q_i\in {\mathcal B}$. 
 A ${\mathcal B}$-bimodule resolution of ${\mathcal B}$ is given by \[
 \Omega^1({\mathcal B}) \stackrel{d'} \longrightarrow {\mathcal B} ^e.\]
  By \cite[Therorem 10.6]{Schofield} (see also~\cite[Remark 5.4]{BCS}), we can identify $\Omega^1({\mathcal B})$ with 
${\mathcal B} \underset{\mathcal R}{\otimes} k\overline{E} \underset{\mathcal R}{\otimes} {\mathcal B}$  and $d'(1\otimes v\otimes 1)=v\otimes 1-1\otimes v$, where $\mathcal R$ still denotes $\oplus_{v\in V}ke_v$. 
Here for $A_2$ the edge set $E$ is simply $\{e\}$. Hence, $u$ is given by the following commutative diagram
\[ \xymatrix{
{\mathcal B}^e_1\oplus {\mathcal B}_2^e\ar[r]^-{f} \ar[d]_-d & {\mathcal B} \underset{\mathcal R}{\otimes} k\overline{E} \underset{\mathcal R}{\otimes} {\mathcal B} \ar[d]^-{d'} \\
{\mathcal B}^e_1\oplus {\mathcal B}_2^e  \ar[r]_-{\tau} & {\mathcal B} \underset{\mathcal R}{\otimes} {\mathcal B}. } 
\]
where \begin{align*}
f(p_1\otimes q_1,p_2\otimes q_2) &=f_1(p_1\otimes q_1)-f_2(p_2\otimes q_2)\\
\tau(p_1\otimes q_1,p_2\otimes q_2) &=p_1\otimes q_1-p_2\otimes q_2.
\end{align*}
 Let us give a concrete description of $f$. We have a $k$-linear map $\iota: k\overline E \to  {\mathcal B} \underset{\mathcal R}{\otimes} k\overline{E}\underset{\mathcal R}{\otimes} {\mathcal B}$ which 
sends a path $p=\alpha_1 \cdots \alpha_n$, $\alpha_i \in \overline{E}$, to \[
\sum_{i=1}^n \alpha_1 \cdots \alpha_{i-1} \otimes \alpha_i  \otimes \alpha_{i+1} \cdots \alpha_n .\]
 This map has a natural ${\mathcal B}^e$-linear extension  ${\mathcal B} \to {\mathcal B} \underset{\mathcal R}{\otimes} k\overline{E}\underset{\mathcal R}{\otimes} {\mathcal B}$, still denoted by $\iota$, satisfying\begin{equation}\label{iotadiff}
\iota(bb')=b\iota(b')+\iota(b)b'.
\end{equation}
Then it can be checked that the maps $f_i:{\mathcal B}_i^e \to  {\mathcal B} \underset{\mathcal R}{\otimes} k\overline{E}\underset{\mathcal R}{\otimes} {\mathcal B}$ are given as ${\mathcal B}^e$-linear maps by \begin{align*}
f_1(1\otimes1) &= \iota(a^{-1})\\
f_2(1\otimes 1)&=-\iota(a_2).\end{align*}
We then identify $\mathrm{fib}(u)$ with $(f,\tau)$.

The resolution of ${\mathcal B}^\vee $ as a ${\mathcal B}^e$-module is given by
\[d'^\vee: {\mathcal B}\underset{\mathcal R}{\otimes}   {\mathcal B} \to {\mathcal B} \underset{\mathcal R}{\otimes} k \overline{E} \underset{\mathcal R}{\otimes} {\mathcal B} , \ 1 \otimes e_i \otimes 1 \mapsto \sum_{\alpha \in \overline{E}}( \alpha \otimes \alpha^* \otimes 1- 1 \otimes \alpha^* \otimes \alpha).\]
In the $A_2$ case, this just reads\[
d'^\vee(1\otimes 1)=e\otimes e^*\otimes 1-1\otimes e^*\otimes e+e^*\otimes e\otimes 1-1\otimes e\otimes e^*.\]

 The equivalence \[
 g=\alpha_1 \underset{{\mathcal A}^e}{\otimes} {\mathcal B}^e: ({\mathcal B}^e_1\oplus {\mathcal B}_2^e)[1] \oplus  ({\mathcal B}_1^e\oplus {\mathcal B}_2^e)  \rightarrow ({\mathcal B}^e_1\oplus {\mathcal B}_2^e)[1] \oplus  ({\mathcal B}_1^e\oplus {\mathcal B}_2^e)\]
 is induced by the image of $\alpha_1$ under $\mu_1\amalg-\mu_2$, hence by the internal product $m$ with \[
(a_1\otimes1-1\otimes a_1,a_2^{-1}\otimes 1-1\otimes a_2^{-1})\]
  on both terms, thanks to~\S\ref{absnondeg}. 
The homotopy $\beta_1$ defined by~\eqref{homotopyA2} induces a zero homotopy $h$ of the map \[\xymatrix{
{\mathcal B}^\vee[1] \ar[rr]^-{ug u^\vee[1]} && {\mathcal B}}.\] 
With the chosen resolutions, this yields a map 
$h: {\mathcal B}\underset{\mathcal R}{\otimes} \overline{E} \underset{\mathcal R}{\otimes} {\mathcal B} \to {\mathcal B}\underset{\mathcal R}{\otimes} \overline{E} \underset{\mathcal R}{\otimes} {\mathcal B}$ such that the triangles in the following diagram commute 
\[ \xymatrix{
{\mathcal B} \underset{\mathcal R}{\otimes} {\mathcal B} \ar[r]^-{d'^\vee} \ar[d]_-{fm\tau^\vee} & {\mathcal B} \underset{\mathcal R}{\otimes} k\overline{E} \underset{\mathcal R}{\otimes} {\mathcal B} \ar[d]^-{ \tau mf^\vee} \ar[ld]^-h\\
{\mathcal B} \underset{\mathcal R}{\otimes} k\overline{E} \underset{\mathcal R}{\otimes} {\mathcal B} \ar[r]_-{d'} & {\mathcal B} \underset{\mathcal R}{\otimes} {\mathcal B}. } 
\]
where $\tau^\vee(1\otimes1)=(1\otimes1,-1\otimes1)$.
Now, 
\begin{align*}
fm\tau^\vee(1\otimes1)&=fm(1\otimes1,-1\otimes1)\\
&=f(a_1\otimes1-1\otimes a_1,1\otimes a_2^{-1}-a_2^{-1}\otimes 1)\\
&=a_1\iota(a_1^{-1})-\iota(a_1^{-1})a_1+\iota(a_2)a_2^{-1}-a_2^{-1}\iota(a_2)\\
&=-\iota(a_1)a_1^{-1}+a_1^{-1}\iota(a_1)+\iota(a_2)a_2^{-1}-a_2^{-1}\iota(a_2)\\
&=-(e^*\otimes e\otimes 1+1\otimes e^*\otimes e)a_1^{-1}+a_1^{-1}(e^*\otimes e\otimes 1+1\otimes e^*\otimes e)\\
&\qquad\qquad +(e\otimes e^*\otimes 1+1\otimes e\otimes e^*)a_2^{-1}-a_2^{-1}(e\otimes e^*\otimes 1+1\otimes e\otimes e^*)\\
&=hd'^\vee(1\otimes1)
\end{align*}
if (using~\eqref{invrels2})\begin{align*}
h(1\otimes e\otimes 1)&=a_2^{-1}\otimes e\otimes 1-1\otimes e\otimes a_1^{-1}\\
h(1\otimes e^*\otimes 1)&=1\otimes e^*\otimes a_2^{-1}-a_1^{-1}\otimes e^*\otimes 1.
\end{align*}
The homotopy $h$ therefore induces an isomorphism ${\mathcal B}^\vee[1] \overset{\sim}{\longrightarrow} \mathrm{fib}(u)$ as wished (it is the leftmost vertical map in~\eqref{hocoA2}).
\end{proof}

\subsection{Fusion}\label{subsection-fusion}

Following~\cite{VdB}, we use a fusion procedure to go from the $A_2$ case to the case of an arbitrary quiver $Q=(V,E)$. 
The following endows the ``noncommutative group-valued'' moment map for $k\overline{Q}_{loc}=k\overline {Q}[(1+ee^*)^{-1}]_{e\in\overline E}$, that defines the multiplicative preprojective algebra, with a Calabi--Yau structure. 

\begin{theorem}\label{fuz}
There is a 1-Calabi--Yau structure on the morphism
\begin{eqnarray*}
\mu:\coprod_{v\in V}k[z_v^{\pm1}] & \longrightarrow & k\overline {Q}_{loc} \\
z_v & \longmapsto & \prod_{e\in E\cap t^{-1}(v)}(1+ee^*)\times \prod_{e\in E\cap s^{-1}(v)}(1+e^*e)^{-1}\,.
\end{eqnarray*}
 \end{theorem}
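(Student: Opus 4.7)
The plan is to obtain the Calabi--Yau structure on $\mu$ by fusion, building it from the $A_2$ building block of Proposition~\ref{a2nondeg} and iteratively gluing along the Calabi--Yau cospan of Lemma~\ref{lemma: CY}. The main ingredient is that the composition of $1$-Calabi--Yau cospans preserves non-degeneracy (\cite[Theorem 6.2]{BD1}), so that such compositions continue to produce $1$-Calabi--Yau structures.

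First, for each edge $e \in E$, Proposition~\ref{a2nondeg} (combined with Remark~\ref{rem:inv}, which absorbs the sign in $\mu_1\amalg -\mu_2$ via $x\mapsto x^{-1}$) gives a $1$-Calabi--Yau structure on the morphism
\[
k[x_{s,e}^{\pm1}]\amalg k[x_{t,e}^{\pm1}]\longrightarrow k\overline{\{e\}}_{loc}\,,\quad x_{s,e}\mapsto (1+e^*e)^{-1},\ x_{t,e}\mapsto 1+ee^*.
\]
Taking the coproduct over $e\in E$ produces a $1$-Calabi--Yau morphism
\[
\coprod_{e\in E}\bigl(k[x_{s,e}^{\pm1}]\amalg k[x_{t,e}^{\pm1}]\bigr)\longrightarrow \coprod_{e\in E}k\overline{\{e\}}_{loc}.
\]

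Next, at each vertex $v\in V$, I would successively fuse the $\mathbb{G}_m$-factors $k[x_{*,e}^{\pm1}]$ corresponding to half-edges incident to $v$: each such fusion uses the Calabi--Yau cospan of Lemma~\ref{lemma: CY}, which merges two $\mathbb{G}_m$-factors into one while multiplying their generators on the target side, and simultaneously identifies their idempotents in the pushout. After performing all such fusions at every vertex, one ends up on the source side with $\coprod_{v\in V}k[z_v^{\pm1}]$, with $z_v$ mapped to the product of all $(1+ee^*)$'s and $(1+e^*e)^{-1}$'s indexed by the half-edges at $v$; on the target side one obtains the free product of the $k\overline{\{e\}}_{loc}$'s modulo the successive identifications of idempotents at each vertex, which recovers $k\overline{Q}_{loc}$.

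The main obstacle is the combinatorial bookkeeping: one must verify that the iterated pushout does produce exactly $k\overline{Q}_{loc}$ (i.e.\ that the free product of the $k\overline{\{e\}}_{loc}$'s modulo identification of idempotents at each vertex yields the localized path algebra of $\overline{Q}$), and that the particular order in which the pairwise fusions are carried out does not affect the final Calabi--Yau structure. The latter is reassuring in view of~\cite[Theorem 1.4]{CBS}, stating that $\Lambda^q(Q)$ itself is independent of the ordering; different orderings of the product should produce isomorphic Calabi--Yau structures. Granted this, non-degeneracy of $\mu$ follows automatically from the closure of $1$-Calabi--Yau cospans under composition.
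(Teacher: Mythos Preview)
Your proposal is correct and follows essentially the same strategy as the paper: start from the disjoint union of $A_2$ building blocks (what the paper calls $Q^{\mathrm{sep}}$), use Proposition~\ref{a2nondeg} to get the $1$-Calabi--Yau morphism on the coproduct, and then iteratively compose with the fusion cospan of Lemma~\ref{lemma: CY} at each vertex to merge the $\mathbb{G}_m$-factors and simultaneously identify the idempotents, so that the resulting pushout is $k\overline{Q}_{loc}$. The paper simply fixes a total ordering at each vertex rather than arguing that the result is independent of it (your reference to \cite[Theorem~1.4]{CBS} concerns $\Lambda^q(Q)$, not the Calabi--Yau structure, and is not needed since the theorem only asserts existence).
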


\begin{proof}
Denote by $Q^\mathrm{sep}$ the quiver with same edge set $E$ but vertex set $\overline E=\{v_e=s(e),v_{e^*}=t(e)\}$. It is the disjoint union of $|E|$ copies of $A_2$ that we aim to glue by ``fusing'' vertices. This will be done using composition of Calabi--Yau structures by means of push-outs.
Thanks to~\S\ref{a2nondeg}, we have a 1-Calabi--Yau morphism\begin{equation}\label{CYsep}
\coprod_{e\in E}(k[x_e^{\pm1}]\amalg k[y_e^{\pm1}])\rightarrow k\overline {Q^\mathrm{sep}}[(1+ee^*)^{-1}]_{e\in\overline E}
\end{equation}
given by $x_e\mapsto (e_{s(e)}+e^*e)^{-1}$ and $y_e\mapsto e_{t(e)}+ee^*$.

For each vertex $v\in V$, fix a total ordering of all edges of $E$ with target $v$ and the same with $E^*$.
Consider $e,f=e+1\in E$, both with target $v$. We have a 1-Calabi--Yau cospan~\ref{equation-cospan} given by\[
k[y_e^{\pm1}]\coprod k[y_f^{\pm1}] \rightarrow k\langle y_e^{\pm1} ,y_f^{\pm1}\rangle\leftarrow k[z_{e,f}^{\pm1}]\]
with $z_{e,f}\mapsto y_ey_f$. 
Similarly, if $e^*,f^*=e^*+1\in E^*$, both with target $v$, we have a 1-Calabi--Yau cospan given by\[
k[x_e^{\pm1}]\coprod k[x_f^{\pm1}] \rightarrow k\langle x_e^{\pm1} ,x_f^{\pm1}\rangle\leftarrow k[z_{e,f}^{\pm1}]\]
with $z_{e,f}\mapsto x_ex_f$.
Finally, if $e=\max_E t^{-1}(v)$ and $f^*=\min_{E^*}t^{-1}(v)$, we have a 1-Calabi--Yau cospan given by\[
k[y_e^{\pm1}]\coprod k[x_f^{\pm1}] \rightarrow k\langle y_e^{\pm1} ,x_f^{\pm1}\rangle\leftarrow k[z_{e,f}^{\pm1}]\]
with $z_{e,f}\mapsto y_ex_f$. Proceeding to ordered compositions of cospans, we get a 1-Calabi--Yau cospan given by\[
\xymatrix{
&C_v:=k\Big\langle(y_e^{\pm1})_{e\in E\cap t^{-1}(v)},(x_e^{\pm1})_{e\in E\cap s^{-1}(v)}\Big\rangle&\\
\left(\coprod_{e\in E\cap t^{-1}(v)}k[y_e^{\pm1}]\right)\coprod\left(\coprod_{e\in E\cap s^{-1}(v)}  k[x_e^{\pm1}]\right)\ar[ur]\!\!\!\!\!\!\!\!\!\!\!\!\!\!\!\!\!\!\!\!\!\!\!\!\!\!\!\!\!\!\!\!\!\!\!\!\!\!\!\!\!\!\!\!\!\!\!\!\!\!\!\!\!&&
k[z_v^{\pm1}]\ar[ul]
}\]
where coproducts and variables are ordered.

Now fix an ordering on $V$, composing the above yields a cospan\[
\coprod_{e\in E}(k[x_e^{\pm1}]\amalg k[y_e^{\pm1}])\rightarrow 
\coprod_{v\in V}C_v\leftarrow\coprod_{v\in V}k[z_v^{\pm1}]
\]
that can be composed with~\eqref{CYsep} in order to get a 1-Calabi--Yau structure on $\mu$ as expected.
\end{proof}

\begin{remark} Note that this proof is independent of the choice of the function $\epsilon:\overline Q\to\{\pm1\}$ defining the preprojective multiplicative algebra.\end{remark}

\subsection{Reduction}

Consider a family of 1-Calabi--Yau morphisms $q_v:k[z_v^{\pm1}]\to k$, $v\in V$; that is a collection $q=(q_v)_{v\in V}\in(k^\times)^V$. 
Thanks to Lemma~\ref{eval} and Theorem~\ref{fuz}, we have a $2$-Calabi--Yau structure on the push-out of $\mu$ with $\coprod_{v\in V}q_v$. 
To compute this push-out, let us use for each $v$ the $k[z_v^{\pm1}]$-cofibrant replacement of $k$ given by $k\langle {z'_v},z_v^{\pm1}\rangle$ where $z'_v$ lies in degree $-1$, $z_v$ in degree $0$ 
and the differential is given by $z'_v\mapsto z_v-q_v$. We thus get the following. 
\begin{theorem}\label{dgmult}
For every $q\in (k^\times)^{V(Q)}$, there is a $2$-Calabi--Yau structure on the dg-algebra $\Upsilon^q(Q)$ defined as follows: 
\begin{itemize}
\item As a graded algebra, $\Upsilon^q(Q)$ is freely generated over $k\overline {Q}_{loc}$ by the bimodule 
\[
\left(k\overline {Q}_{loc}\right)^e\underset{\mathcal{R}^e}{\otimes}\left(\underset{v\in V}{\oplus}kz'_v\right)\,;
\] 
\item The differential sends $z'_v$ to 
\[
\left(\prod_{e\in E\cap t^{-1}(v)}(1+ee^*)\times \prod_{e\in E\cap s^{-1}(v)}(1+e^*e)^{-1}\right)-q_v\,.
\]
\end{itemize}
\end{theorem}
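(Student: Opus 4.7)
The plan is to realize $\Upsilon^q(Q)$ as a homotopy pushout of two $1$-Calabi--Yau morphisms and then apply the principle, recalled just after Definition~2.5 of the excerpt, that the pushout of two $n$-Calabi--Yau morphisms inherits an $(n+1)$-Calabi--Yau structure (via the fact that a cospan $\varnothing \to \cat{C} \leftarrow \varnothing$ being $n$-Calabi--Yau is the same as an $(n+1)$-Calabi--Yau structure on $\cat{C}$). Concretely, I take the cospan
\[
\coprod_{v\in V} k \xleftarrow{\coprod_v q_v} \coprod_{v\in V} k[z_v^{\pm1}] \xrightarrow{\mu} k\overline{Q}_{\mathrm{loc}},
\]
in which the left arrow is $1$-Calabi--Yau by Lemma~\ref{eval} (applied componentwise and taking coproducts) and the right arrow is $1$-Calabi--Yau by Theorem~\ref{fuz}. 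Composing these two cospans in the $\infty$-category of $1$-Calabi--Yau cospans yields a $1$-Calabi--Yau structure on $\varnothing \to P \leftarrow \varnothing$, where $P$ is the $\infty$-categorical pushout; equivalently, this is a $2$-Calabi--Yau structure on $P$.

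The remaining task is to identify $P$ with $\Upsilon^q(Q)$. For this I need a cofibrant resolution of $\coprod_v q_v$, after which the homotopy pushout is computed by the strict pushout. For each $v$, factor $q_v : k[z_v^{\pm1}] \to k$ through the free dg-algebra $k\langle z'_v, z_v^{\pm1}\rangle$ with $|z'_v|=-1$ and differential $dz'_v = z_v - q_v$: the inclusion $k[z_v^{\pm1}] \hookrightarrow k\langle z'_v, z_v^{\pm1}\rangle$ is a cofibration (freely adjoining a generator), and the projection $k\langle z'_v, z_v^{\pm1}\rangle \to k$ sending $z_v \mapsto q_v$, $z'_v \mapsto 0$, is a quasi-isomorphism (the complex $k\langle z'_v, z_v^{\pm1}\rangle$ is acyclic in negative degrees and in degree $0$ reduces to $k[z_v^{\pm1}]/(z_v - q_v) = k$). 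Taking the coproduct over $v$ yields a cofibrant replacement of $\coprod_v q_v$.

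The strict pushout along this cofibrant replacement then produces precisely $\Upsilon^q(Q)$: as a graded algebra it is obtained from $k\overline{Q}_{\mathrm{loc}}$ by freely adjoining, over the subalgebra $\mathcal R = \oplus_v k e_v$, one generator $z'_v$ of degree $-1$ per vertex, while the differential is imposed by the pushout relation $dz'_v = \mu(z_v) - q_v$, which reads
\[
dz'_v = \Bigl(\prod_{e\in E\cap t^{-1}(v)}(1+ee^*) \cdot \prod_{e\in E\cap s^{-1}(v)}(1+e^*e)^{-1}\Bigr) - q_v.
\]
This matches the definition of $\Upsilon^q(Q)$ in the statement. The only point that requires a little care, rather than a calculation, is checking that the bimodule description in the statement is consistent with the free algebra description, i.e.~that the generators $z'_v$ sit over the appropriate idempotents $e_{s(z_v)} = e_{t(z_v)} = e_v$ inherited from the construction of $\mu$; this is automatic from the definition of $\mu$ in Theorem~\ref{fuz}. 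Transporting the $2$-Calabi--Yau structure on $P$ along the equivalence $P \simeq \Upsilon^q(Q)$ concludes the proof. The only potential subtlety, and the step I would double-check most carefully, is that no further cofibrant replacement of $\mu$ is needed when forming the pushout; this is fine because the cofibrant replacement is taken on the side of $\coprod_v q_v$, and the resulting map of cofibrant replacements is still a cofibration in the Dwyer--Kan model structure on dg-categories.
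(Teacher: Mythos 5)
Your argument is correct and is essentially the paper's own: the paper likewise obtains the $2$-Calabi--Yau structure as the Calabi--Yau pushout of $\mu$ (Theorem~\ref{fuz}) with $\coprod_{v}q_v$ (Lemma~\ref{eval}), and computes the homotopy pushout using exactly the same cofibrant replacement $k\langle z'_v,z_v^{\pm1}\rangle$ of $k$ with $dz'_v=z_v-q_v$. Your additional checks (acyclicity of the replacement, identification of the strict pushout with $\Upsilon^q(Q)$) are correct and merely make explicit what the paper leaves implicit.
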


\begin{remark}
\begin{itemize}
\item The zeroth cohomology of $\Upsilon^q(Q)$ is the deformed preprojective algebra $\Lambda^q(Q)$.
\item The dg-algebra $\Upsilon^q(Q)$ coincides with the one of~\cite[\S5.C]{BK} (in the case of a nodal curve with rationnal components), as well as the one of~\cite[Definition 4.3]{KaSc}
\item Theorem~\ref{dgmult} generalizes~\cite[Theorem 5.52]{Yeung} from star-shaped quivers to arbitrary ones.
\end{itemize}
\end{remark}


\section{Comparison: moduli of objects} \label{section-comparison}

The moduli of objects $\mathbf{Perf}$ was introduced by To\"en-Vaqui\'e in \cite{ToVa} as a functor \[
\mathbf{Perf}: \scat{Cat}_k^{f.t.} \to \scat{dSt}_k^{Art}\]
 from the $\infty$-category of finite type dg-categories to the $\infty$-category of derived Artin $k$-stacks. For a finite type dg-category $\mathcal{A}$ and a commutative differential graded $k$-algebra $B$, 
$\mathbf{Perf}_{\mathcal{A}}(B):=\cat{Map}_{\scat{Cat}_k}(\cat{A},\cat{Mod}_B^{perf})$ consists in perfect $B$-module valued $\mathcal{A}$-modules. 
In~\cite{PTVV}, $n$-shifted symplectic structures for Artin stack, as well as $n$-shifted lagrangian morphisms and correspondences (see also~\cite{CalTFT}) have been introduced.
Calabi--Yau structures on dg-categories and functors can be considered as non-commutative analogs of shifted symplectic and lagrangian structures in the following sense: 
by \cite[Theorem 5.5]{BD2} (see also \cite{ToCY}), the moduli stack of objects $\mathbf{Perf}$ sends $n$-Calabi--Yau structures to $(2-n)$-shifted symplectic structures, and can be extended to a functor from
$n$-Calabi--Yau cospans to $(2-n)$-shifted lagrangian correspondences. 

Another way of producing new shifted symplectic and lagrangian structures from old ones was discovered in~\cite[Theorem 2.5]{PTVV}: 
it is shown that for an $n$-shifted symplectic Artin stack $X$, the mapping stack $\textsc{Map}\big(-,X\big)$ in $\scat{dSt}^{Art}$ sends (nice enough) $d$-oriented Artin stacks to $(n-d)$-shifted symplectic stacks. 
By \cite[Theorem 4.8]{CalTFT} the functor $\textsc{Map}\big(-,X\big)$ sends (nice enough) $d$-oriented cospans to $(n-d)$-shifted lagrangian correspondences. 
Note that the Betti-stack functor, denoted by $(-)_B$, maps $d$-oriented manifolds to (sufficiently nice) $d$-oriented derived stacks. 

\subsection{Moduli of objects of $k[x^{\pm1}]$, derived loop stacks, and the adjoint quotient}\label{subsection-comparison}

On the one hand, the $1$-Calabi--Yau structure on $k[x^{\pm1}]$ as constructed in Section \ref{section: CY} induces a $1$-shifted symplectic structure on the derived stack $\mathbf{Perf}_{ k[x^{\pm1}]}$. 
On the other hand, $\mathbf{Perf}_{ k[x^{\pm1}]}$ is equivalent to the derived loop stack $\mathcal L\mathbf{Perf}_{k}:=\textsc{Map}(B\mathbb{Z},\mathbf{Perf}_{k})$. 
Knowing that $B\mathbb{Z}\simeq S^1_B$ is $1$-oriented, and that $\mathbf{Perf}_{k}$ is $2$-shifted symplectic (because $k$ is $0$-Calabi--Yau), we obtain, thanks to \cite[Theorem 2.5]{PTVV},  
a transgressed $1$-shifted symplectic structure on $\mathcal L\mathbf{Perf}_{k}$. 
\begin{proposition}
There is an equivalence 
\[
\mathbf{Perf}_{k[x^{\pm1}]}\simeq\mathcal L\mathbf{Perf}_{k}
\]
as $1$-shifted symplectic derived stacks. 
\end{proposition}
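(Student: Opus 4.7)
The plan is to use the description of the $1$-Calabi--Yau structure on $k[x^{\pm1}]$ as a Calabi--Yau pushout (Proposition~\ref{prop: CYpushout}) and match it with a dual description of the transgressed $1$-shifted symplectic structure on $\mathcal L\mathbf{Perf}_k$ coming from the standard decomposition of the circle as a composition of elementary cospans. At the level of underlying derived stacks the equivalence $\mathbf{Perf}_{k[x^{\pm1}]}\simeq\mathcal L\mathbf{Perf}_k$ is essentially tautological: since $k[x^{\pm1}]=k[\mathbb Z]$ is the group algebra of $\pi_1(S^1)$ and $B\mathbb Z\simeq S^1_B$, a $B$-point of either stack is a perfect $B$-module equipped with an automorphism. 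The real content is to match the two $1$-shifted symplectic structures.

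On the Calabi--Yau side, Proposition~\ref{prop: CYpushout} exhibits $k[x^{\pm1}]$ as the Calabi--Yau pushout $k\amalg_{k\amalg\bar k}k$. Applying $\mathbf{Perf}$ and invoking \cite[Theorem 5.5]{BD2} (compatibility of $\mathbf{Perf}$ with Calabi--Yau pushouts of lagrangian correspondences) realises $\mathbf{Perf}_{k[x^{\pm1}]}$, together with its induced $1$-shifted symplectic structure, as the derived self-intersection
\[
\mathbf{Perf}_k\underset{\mathbf{Perf}_k\times\mathbf{Perf}_k}{\times}\mathbf{Perf}_k
\]
of the diagonal lagrangian $\mathbf{Perf}_k\to\mathbf{Perf}_k\times\mathbf{Perf}_k$, one leg carrying its canonical lagrangian structure and the other its opposite. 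On the loop-stack side, the circle admits the standard decomposition $S^1\simeq \mathrm{pt}\amalg_{\mathrm{pt}\amalg\mathrm{pt}}\mathrm{pt}$ as a composition of two $1$-oriented elementary cospans of points. Applying $\textsc{Map}(-,\mathbf{Perf}_k)$ and using \cite[Theorem 4.8]{CalTFT} turns this into a composition of lagrangian correspondences whose total space recovers $\mathcal L\mathbf{Perf}_k$ as exactly the same derived self-intersection of the diagonal.

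The comparison therefore reduces to identifying two $1$-shifted lagrangian structures on this diagonal: the one obtained by applying $\mathbf{Perf}$ to the codiagonal Calabi--Yau morphism $k\amalg\bar k\to k$, and the one obtained by transgressing the $2$-shifted symplectic form on $\mathbf{Perf}_k$ along the $1$-oriented cospan $\mathrm{pt}\amalg\mathrm{pt}\to\mathrm{pt}$. This is the main obstacle, and it is essentially the compatibility of $\mathbf{Perf}$ with the $2$-dimensional oriented TFT conjectured in Remark~\ref{remark-conjecturale}. I would verify it by direct computation against the small resolution of~\S\ref{absnondeg}: both structures reduce to the canonical trace pairing on self-Exts of a perfect complex, with the only remaining freedom being a relative orientation sign dictated on both sides by the same choice of orientation of a bounding interval. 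Once the underlying $2$-form is shown to match in this way, the uniqueness of symplectic lifts of non-degenerate closed Hochschild classes (the commutative counterpart of Theorem~\ref{theorem-astuce}) pins down the full $1$-shifted symplectic structures.
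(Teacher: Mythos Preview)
Your approach is essentially the paper's: decompose both sides as the self-intersection of the diagonal lagrangian $\mathbf{Perf}_k\to\mathbf{Perf}_k\times\overline{\mathbf{Perf}}_k$, using Proposition~\ref{prop: CYpushout} and \cite{BD2} on one side and the circle decomposition with \cite[Theorem 4.8]{CalTFT} on the other.

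The difference is in how the remaining comparison of lagrangian structures on the diagonal is handled. The paper treats this as immediate from the cited references: both constructions yield the canonical diagonal lagrangian of a shifted symplectic stack in itself times its opposite, so no further computation is performed. You flag this as ``the main obstacle'' and propose a direct verification against the small resolution together with a uniqueness argument for symplectic lifts. That extra care is not unreasonable, but your proposed resolution is shakier than the rest of your argument: there is no general ``commutative counterpart of Theorem~\ref{theorem-astuce}'' for shifted symplectic structures on stacks such as $\mathbf{Perf}_k$ (which is far from being concentrated in degree~$0$), so the uniqueness-of-lift step as stated would not go through. If you want to make the comparison fully explicit, it is cleaner to observe that in both frameworks the lagrangian structure on the diagonal is precisely the tautological null-homotopy $\omega-\omega\sim 0$ of the pullback of the product form, and that \cite{BD2} is set up so that $\mathbf{Perf}$ sends the Calabi--Yau codiagonal $k\amalg\bar k\to k$ to exactly this structure.
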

\begin{proof}
On the one hand, recall that for every $n$-shifted symplectic derived stack $X$, the derived loop stack $\mathcal L X$ is equivalent, as an $(n-1)$-shifted symplectic derived stack, to the derived lagrangian intersection 
\[
X\underset{X\times\overline{X}}{\times}X\,,
\]
where $\overline{X}$ denotes the same derived stack equipped with the opposite $n$-shifted symplectic structure. Indeed, the functor $\textsc{Map}\big((-)_B,X\big)$ is an oriented topological field theory 
(see \cite[Theorem 4.8]{CalTFT}), and as such it sends the gluing of two oriented manifolds along a common boundary to the corresponding derived lagrangian intersection. The case of interest for us is the one of $S^1$, 
that is obtained by gluing two closed intervals along two points: 
\[
S^1\simeq \mathrm{pt}\underset{\mathrm{pt}\coprod \overline{\mathrm{pt}}}{\coprod}\mathrm{pt}\,,
\]
where $\overline{\mathrm{pt}}$ denotes the point with its opposite orientation. 

On the other hand, using Proposition \ref{prop: CYpushout} and the fact that $\mathbf{Perf}$ sends compositions of Calabi--Yau cospans to compositions of lagrangian correspondences (and, in particular, 
Calabi--Yau pushouts to lagrangian intersections), see \cite{BD2} and \cite[\S 6.1.2]{BCS}, we obtain that 
\[
\mathbf{Perf}_{k[x^{\pm1}]}\simeq \mathbf{Perf}_{k\underset{k\coprod \bar k}{\coprod}k}\simeq \mathbf{Perf}_k\underset{\mathbf{Perf}_k\times\overline{\mathbf{Perf}}_k}{\times}\mathbf{Perf}_k\simeq \mathcal{L}\mathbf{Perf}_k
\]
as $1$-shifted symplectic derived stacks. 
\end{proof}

Finally, by restricting ourselves to the open substack consisting of perfect modules of amplitude $0$ and fixed dimension $n$, we get back the transgressed $1$-shifted symplectic structure on $\mathcal L(BGL_n)$
 (recall that the open embedding $BGL_n\hookrightarrow \mathbf{Perf}_k$ is a $2$-shifted symplectomorphism). 
According to \cite{Safronov}, this $1$-shifted symplectic structure coincides with the explicit one given on the adjoint quotient $[GL_n/GL_n]\simeq \mathcal L(BGL_n)$ by the quasi-hamiltonian formalism (see \cite{CalTFT,Safronov}). 

\begin{remark}
The $1$-Calabi--Yau evaluation morphism $q:k[x^{\pm1}]\to k$, $q\in k^\times$, induces a $1$-shifted lagrangian morphism $\mathbf{Perf}_k\to \mathbf{Perf}_{k[x^{\pm1}]}$. We let the reader check that, when restricted on the open substacks of amplitude $0$ 
modules of dimension $n$, it gives back the lagrangian morphism $BGL_n\to [GL_n/GL_n]$ corresponding to the group-valued moment map $\mathrm{pt}\to GL_n$ given by $q\mathrm{Id}_n$. 
\end{remark}

\subsection{Moduli of objects of $k\langle x^{\pm1},y^{\pm1}\rangle$, pair of pants, and fusion}\label{subsec-fusionpants}

Recall the lagrangian structure on the correspondence
\begin{equation}\label{equation-lagrangianpant}
    \mathbf{Perf}_{k[x^{\pm1}]\coprod k[y^{\pm1}]} \longleftarrow \mathbf{Perf}_{k\langle x^{\pm1},y^{\pm1}\rangle}  \longrightarrow \mathbf{Perf}_{k[z^{\pm1}]}\,,
\end{equation}
given by applying the moduli of objects $\mathbf{Perf}$ to the Calabi--Yau cospan~\eqref{equation-cospan} (see~\cite[Theorem 5.5]{BD2}). 
Using the other description from~\S\ref{subsection: pants} of the Calabi--Yau cospan~\eqref{equation-cospan}, and the fact that the functor $\textsc{Map}\big((-)_B,\mathbf{Perf}\big)$ sends $\mathrm{pt}$ to $\mathbf{Perf}_k$, 
we obtain an alternative construction of the lagrangian correspondence~\eqref{equation-lagrangianpant}. This is achieved by applying $\textsc{Map}\big((-)_B,\mathbf{Perf}\big)$ to the diagram

\[
\xymatrix{
&{\begin{matrix}\mathrm{pt} & \mathrm{pt} \end{matrix}}\ar[d]&&{\begin{matrix}\mathrm{pt} \\ \mathrm{pt}\end{matrix}}\ar[d] & \\
\emptyset \ar[ru]\ar[rd]& {\begin{matrix}\mathrm{pt} & \mathrm{pt} \end{matrix}} & {\begin{matrix}\mathrm{pt} & \bar{\mathrm{pt}} \\ \bar{\mathrm{pt}} & \mathrm{pt}\end{matrix}} \ar[ur]\ar[ul]\ar[dr]\ar[dl]& \mathrm{pt} & \emptyset \ar[lu]\ar[ld]\\
&{\begin{matrix}\mathrm{pt} & \mathrm{pt} \end{matrix}}\ar[u]&&{\begin{matrix}\mathrm{pt} & \mathrm{pt} \end{matrix}}\ar[u]&
}
\]
and then horizontally compose correspondences, as $\mathbf{Perf}$ sends push-outs to pull-backs. Here we recall that $\overline{(-)}$ denotes the orientation, respectively the symplectic structure, with inverted sign. 
A convenient replacement of the above diagram looks as follows:

\begin{equation}\label{pushpants}
\xymatrix{
&
\begin{tikzpicture}[scale=0.2, thick, baseline=(current bounding box.center)]
\begin{scope}[xshift=-1cm]
\draw(0,2.7)node{\small$\blacklozenge$};
\draw(0,-2.7)node{\small$\blacklozenge$};
\draw (-2.5,0) arc (180:117:4.5 and 3);
\draw (-2.5,0) arc (180:243:4.5 and 3);
\end{scope}
\begin{scope}[xshift=1cm]
\draw(0,2.7)node{\small$\blacklozenge$};
\draw(0,-2.7)node{\small$\blacklozenge$};
\draw (2.5,0) arc (0:63:4.5 and 3);
\draw (2.5,0) arc (0:-63:4.5 and 3);
\end{scope}
\end{tikzpicture}\ar[d]
&
\begin{tikzpicture}[scale=0.2, thick, baseline=(current bounding box.center)]
\begin{scope}[xshift=-1cm]
\draw(0,2.7)node{\small$\blacklozenge$};
\draw(0,-2.7)node{\small$\blacklozenge$};
\end{scope}
\begin{scope}[xshift=1cm]
\draw(0,2.7)node{\small$\blacklozenge$};
\draw(0,-2.7)node{\small$\blacklozenge$};
\end{scope}
\end{tikzpicture}
\ar[l]\ar[d]^-[left]{\sim}\ar[r]&
\begin{tikzpicture}[scale=0.2, thick, baseline=(current bounding box.center)]
\draw(-2,-2.7)node{\small$\blacklozenge$};
\draw(2,-2.7)node{\small$\blacklozenge$};
\draw(-2,2.7)node{\small$\blacklozenge$};
\draw(2,2.7)node{\small$\blacklozenge$};
\draw (2,2.7) arc (63:116:4.5 and 3);
\draw (2,-2.7) arc (-63:-116:4.5 and 3);
\end{tikzpicture}
\ar[d]
\\
\varnothing\ar[ur]\ar[dr]\ar[r]&
\begin{tikzpicture}[scale=0.2, thick, baseline=(current bounding box.center)]
\begin{scope}[xshift=-1cm]
\draw(0,2.7)node{\small$\blacklozenge$};
\draw(0,-2.7)node{\small$\blacklozenge$};
\draw(0,-1)node{$\bullet$};
\draw(0,1)node{$\bullet$};
\draw (0,-2.7)--(0 ,-1)(0,1)--(0,2.7);
\draw (0, 1) arc (90:270:1.5 and 1);
\draw (-2.5,0) arc (180:117:4.5 and 3);
\draw (-2.5,0) arc (180:243:4.5 and 3);
\end{scope}
\begin{scope}[xshift=1cm]
\draw(0,2.7)node{\small$\blacklozenge$};
\draw(0,-2.7)node{\small$\blacklozenge$};
\draw(0,-1)node{$\bullet$};
\draw(0,1)node{$\bullet$};
\draw (0,-2.7)--(0 ,-1)(0,1)--(0,2.7);
\draw (0,-1) arc (-90:90:1.5 and 1);
\draw (2.5,0) arc (0:63:4.5 and 3);
\draw (2.5,0) arc (0:-63:4.5 and 3);
\end{scope}
\end{tikzpicture}&
\begin{tikzpicture}[scale=0.2, thick, baseline=(current bounding box.center)]
\begin{scope}[xshift=-1cm]
\draw(0,2.7)node{\small$\blacklozenge$};
\draw(0,-2.7)node{\small$\blacklozenge$};
\draw(0,-1)node{$\bullet$};
\draw(0,1)node{$\bullet$};
\draw (0,-2.7)--(0 ,-1)(0,1)--(0,2.7);
\end{scope}
\begin{scope}[xshift=1cm]
\draw(0,2.7)node{\small$\blacklozenge$};
\draw(0,-2.7)node{\small$\blacklozenge$};
\draw(0,-1)node{$\bullet$};
\draw(0,1)node{$\bullet$};
\draw (0,-2.7)--(0 ,-1)(0,1)--(0,2.7);
\end{scope}
\end{tikzpicture}
\ar[l]\ar[r]&
\begin{tikzpicture}[scale=0.2, thick, baseline=(current bounding box.center)]
\draw(-2,-2.7)node{\small$\blacklozenge$};
\draw(2,-2.7)node{\small$\blacklozenge$};
\draw(-2,2.7)node{\small$\blacklozenge$};
\draw(2,2.7)node{\small$\blacklozenge$};
\draw(-2,-1)node{$\bullet$};
\draw(2,-1)node{$\bullet$};
\draw(-2,1)node{$\bullet$};
\draw(2,1)node{$\bullet$};
\draw (-2, -1) arc (-90:90:1.5 and 1);
\draw (2,1) arc (90:270:1.5 and 1);
\draw (2,2.7) arc (63:116:4.5 and 3);
\draw (2,-2.7) arc (-63:-116:4.5 and 3);
\draw (-2,-2.7)--(-2 ,-1)(-2,1)--(-2,2.7)(2,-2.7)--(2 ,-1)(2,1)--(2,2.7);
\end{tikzpicture}
&\varnothing\ar[ul]\ar[l]\ar[dl]
\\
&
\begin{tikzpicture}[scale=0.2, thick, baseline=(current bounding box.center)]
\begin{scope}[xshift=-1cm]
\draw(0,-1)node{$\bullet$};
\draw(0,1)node{$\bullet$};
\draw (0, 1) arc (90:270:1.5 and 1);
\end{scope}
\begin{scope}[xshift=1cm]
\draw(0,-1)node{$\bullet$};
\draw(0,1)node{$\bullet$};
\draw (0,-1) arc (-90:90:1.5 and 1);
\end{scope}
\end{tikzpicture}
\ar[u]&
\begin{tikzpicture}[scale=0.2, thick, baseline=(current bounding box.center)]
\begin{scope}[xshift=-1cm]
\draw(0,-1)node{$\bullet$};
\draw(0,1)node{$\bullet$};
\end{scope}
\begin{scope}[xshift=1cm]
\draw(0,-1)node{$\bullet$};
\draw(0,1)node{$\bullet$};
\end{scope}
\end{tikzpicture}
\ar[l]\ar[u]_-[right]{\sim}\ar[r]&
\begin{tikzpicture}[scale=0.2, thick, baseline=(current bounding box.center)]
\draw(-2,-1)node{$\bullet$};
\draw(2,-1)node{$\bullet$};
\draw(-2,1)node{$\bullet$};
\draw(2,1)node{$\bullet$};
\draw (-2, -1) arc (-90:90:1.5 and 1);
\draw (2,1) arc (90:270:1.5 and 1);
\end{tikzpicture}
\ar[u]
}
\end{equation}

Taking pushouts along the three horizontal correspondences above yields the following $1$-oriented cospan/cobordism:

\[
\xymatrix{
&
\begin{tikzpicture}[scale=0.3, thick, baseline=(current bounding box.center)]
\draw (4.5,0) arc (0:360:4.5 and 3);
\draw(-2,-2.7)node{\small$\blacklozenge$};
\draw(2,-2.7)node{\small$\blacklozenge$};
\draw(-2,2.7)node{\small$\blacklozenge$};
\draw(2,2.7)node{\small$\blacklozenge$};
\end{tikzpicture}
\ar[d]
\\
\varnothing\ar[ur]\ar[dr]\ar[r]&
\begin{tikzpicture}[scale=0.3, thick, baseline=(current bounding box.center)]
\draw (-2, 1) arc (90:270:1.5 and 1);
\draw (-2, -1) arc (-90:90:1.5 and 1);
\draw (2,1) arc (90:270:1.5 and 1);
\draw (2,-1) arc (-90:90:1.5 and 1);
\draw (4.5,0) arc (0:360:4.5 and 3);
\draw (-2,-2.7)--(-2 ,-1)(-2,1)--(-2,2.7)(2,-2.7)--(2 ,-1)(2,1)--(2,2.7);
\draw(-2,-2.7)node{\small$\blacklozenge$};
\draw(2,-2.7)node{\small$\blacklozenge$};
\draw(-2,2.7)node{\small$\blacklozenge$};
\draw(2,2.7)node{\small$\blacklozenge$};
\draw(-2,-1)node{$\bullet$};
\draw(2,-1)node{$\bullet$};
\draw(-2,1)node{$\bullet$};
\draw(2,1)node{$\bullet$};
\end{tikzpicture}
&\varnothing\ar[ul]\ar[l]\ar[dl]
\\
&
\begin{tikzpicture}[scale=0.3, thick, baseline=(current bounding box.center)]
\draw (-2, 1) arc (90:270:1.5 and 1);
\draw (-2, -1) arc (-90:90:1.5 and 1);
\draw (2,1) arc (90:270:1.5 and 1);
\draw (2,-1) arc (-90:90:1.5 and 1);
\draw(-2,-1)node{$\bullet$};
\draw(2,-1)node{$\bullet$};
\draw(-2,1)node{$\bullet$};
\draw(2,1)node{$\bullet$};
\end{tikzpicture}\ar[u]&
}
\]

Note that the manifold at the center of the diagram is the pair of pants (see Figure~\ref{figure}).
\begin{figure}[h!]
\[
\begin{tikzpicture}[scale=0.4, thick, baseline=(current bounding box.center)]

\draw (-2, 1) arc (90:270:1.5 and 1);
\draw (-2, -1) arc (-90:90:1.5 and 1);
\draw (2,1) arc (90:270:1.5 and 1);
\draw (2,-1) arc (-90:90:1.5 and 1);

\draw(-0.52,-0.2) .. controls (0.5, 2.8) and (1.5, 2.8) .. (0.52,0.2);

\draw(-3.53,0.15)-- (-1.51,5.2);

\draw(0,6) arc(90:170: 1.5 and 1);

\draw (3.5, 0) -- (5.5, 5);
\draw  (0, 6) -- (4,6)  ;

\draw (4,6) arc (90:0: 1.5 and 1);

\draw[dashed] (-1.5,5) arc (-180:-90: 1.5 and 1)--(4,4) arc (-90:-30: 1.5 and 1);

\end{tikzpicture}
\hspace{0.2cm}\leftrightsquigarrow\hspace{0.2cm}
\begin{tikzpicture}[scale=0.4, thick, baseline=(current bounding box.center)]

\draw[color=red]  (-2, -1) --(-1.25, 0.875);
\draw[color=red] (2.75, 0.875) -- (2,-1);

\draw[color=red] (2,1) -- (4,6) ;

\draw (-2, 1) arc (90:270:1.5 and 1);
\draw (-2, -1) arc (-90:90:1.5 and 1);
\draw (2,1) arc (90:270:1.5 and 1);
\draw (2,-1) arc (-90:90:1.5 and 1);

\draw(-0.52,-0.2) .. controls (0.5, 2.8) and (1.5, 2.8) .. (0.52,0.2);

\draw(-3.53,0.15)-- (-1.51,5.2);

\draw(0,6) arc(90:170: 1.5 and 1);

\draw (3.5, 0) -- (5.5, 5);
\draw[color=red] (-2, 1) -- (0, 6);
\draw  (0, 6) -- (4,6)  ;

\draw (4,6) arc (90:0: 1.5 and 1);

\draw[dashed,color=red] (-1.25, 0.875) --(0, 4)  (4,4) -- (2.75, 0.875);
\draw[dashed] (-1.5,5) arc (-180:-90: 1.5 and 1)--(4,4) arc (-90:-30: 1.5 and 1);

\end{tikzpicture}
\hspace{0.2cm}\leftrightsquigarrow\hspace{0.2cm}
\begin{tikzpicture}[scale=0.4, thick, baseline=(current bounding box.center)]

\draw[dashed] (-1.25, 0.875) -- (0, 4) -- (3.2,4);

\draw  (3.2, 4) -- (4,4) -- (2,-1);

\draw(-1.25, 0.875) -- (-2, -1) arc (-90:90:1.5 and 1) -- (0, 6) -- (4,6)-- (2,1) arc (90:270:1.5 and 1);

\draw (-0.52,-0.2) .. controls (0.5, 2.8) and (1.5, 2.8) .. (0.52,0.2);

\begin{scope}[xshift=1.5cm]
\draw (2,-1) -- (2.75,0.875);
\draw[dashed] (2,-1) -- (4,4);

\draw (4,6) -- (2,1);

\draw (2,-1) arc (-90:90:1.5 and 1);

\draw (3.48, -0.2) -- (5.5, 5);

\draw (4,6) arc (90:0: 1.5 and 1);
\draw[dashed] (4,4) arc (-90:-30: 1.5 and 1);

\end{scope}

\begin{scope}[xshift=-1.5cm]

\draw (-2, 1) arc (90:270:1.5 and 1);
\draw (0,6) arc (90:170: 1.5 and 1);
\draw[dashed] (0,4) arc (-90:-180: 1.5 and 1);
\draw (0,4) arc (-90:-120: 1.5 and 1);

\draw(-3.53,0.15)-- (-1.51,5.2);

\draw (-2,1)--(and 0,6);

\draw (-2,1)--(and 0,6);
\draw (-2,-1)--(and 0,4);

\end{scope}

\end{tikzpicture}
\]

\[
\begin{tikzpicture}[scale=0.4, thick, baseline=(current bounding box.center)]

\draw (-2, 1) arc (90:270:1.5 and 1);
\draw (-2, -1) arc (-90:90:1.5 and 1);
\draw (2,1) arc (90:270:1.5 and 1);
\draw (2,-1) arc (-90:90:1.5 and 1);

\draw (4.5,0) arc (0:360:4.5 and 3);
\end{tikzpicture}
\hspace{0.2cm}\leftrightsquigarrow\hspace{0.2cm}
\begin{tikzpicture}[scale=0.4, thick, baseline=(current bounding box.center)]

\draw (-2, 1) arc (90:270:1.5 and 1);
\draw (-2, -1) arc (-90:90:1.5 and 1);
\draw (2,1) arc (90:270:1.5 and 1);
\draw (2,-1) arc (-90:90:1.5 and 1);

\draw (4.5,0) arc (0:360:4.5 and 3);

\draw[color=red] (-2,-2.7)--(-2 ,-1)(-2,1)--(-2,2.7)(2,-2.7)--(2 ,-1)(2,1)--(2,2.7);

\draw(-2,-2.7)node{\small$\blacklozenge$};
\draw(2,-2.7)node{\small$\blacklozenge$};
\draw(-2,2.7)node{\small$\blacklozenge$};
\draw(2,2.7)node{\small$\blacklozenge$};

\draw(-2,-1)node{$\bullet$};
\draw(2,-1)node{$\bullet$};
\draw(-2,1)node{$\bullet$};
\draw(2,1)node{$\bullet$};

\end{tikzpicture}
\hspace{0.2cm}\leftrightsquigarrow\hspace{0.2cm}
\begin{tikzpicture}[scale=0.4, thick, baseline=(current bounding box.center)]

\draw(-2,-2.7)node{\small$\blacklozenge$};
\draw(2,-2.7)node{\small$\blacklozenge$};
\draw(-2,2.7)node{\small$\blacklozenge$};
\draw(2,2.7)node{\small$\blacklozenge$};

\draw(-2,-1)node{$\bullet$};
\draw(2,-1)node{$\bullet$};
\draw(-2,1)node{$\bullet$};
\draw(2,1)node{$\bullet$};

\draw (-2, -1) arc (-90:90:1.5 and 1);
\draw (2,1) arc (90:270:1.5 and 1);

\draw (2,2.7) arc (63:116:4.5 and 3);
\draw (2,-2.7) arc (-63:-116:4.5 and 3);

\draw (-2,-2.7)--(-2 ,-1)(-2,1)--(-2,2.7)(2,-2.7)--(2 ,-1)(2,1)--(2,2.7);

\begin{scope}[xshift=-1.5cm]

\draw(-2,2.7)node{\small$\blacklozenge$};
\draw(-2,-2.7)node{\small$\blacklozenge$};

\draw(-2,-1)node{$\bullet$};
\draw(-2,1)node{$\bullet$};

\draw (-2,-2.7)--(-2 ,-1)(-2,1)--(-2,2.7);
\draw (-2, 1) arc (90:270:1.5 and 1);
\draw (-4.5,0) arc (180:117:4.5 and 3);
\draw (-4.5,0) arc (180:243:4.5 and 3);

\end{scope}

\begin{scope}[xshift=1.5cm]
\draw(2,2.7)node{\small$\blacklozenge$};
\draw(2,-2.7)node{\small$\blacklozenge$};

\draw(2,-1)node{$\bullet$};
\draw(2,1)node{$\bullet$};

\draw (2,-2.7)--(2 ,-1)(2,1)--(2,2.7);
\draw (2,-1) arc (-90:90:1.5 and 1);
\draw (4.5,0) arc (0:63:4.5 and 3);
\draw (4.5,0) arc (0:-63:4.5 and 3);

\end{scope}

\end{tikzpicture}
\]
\caption{The decomposition of the pair of pants corresponding to~\eqref{pushpants}, in 3d and 2d.}\label{figure}
\end{figure}
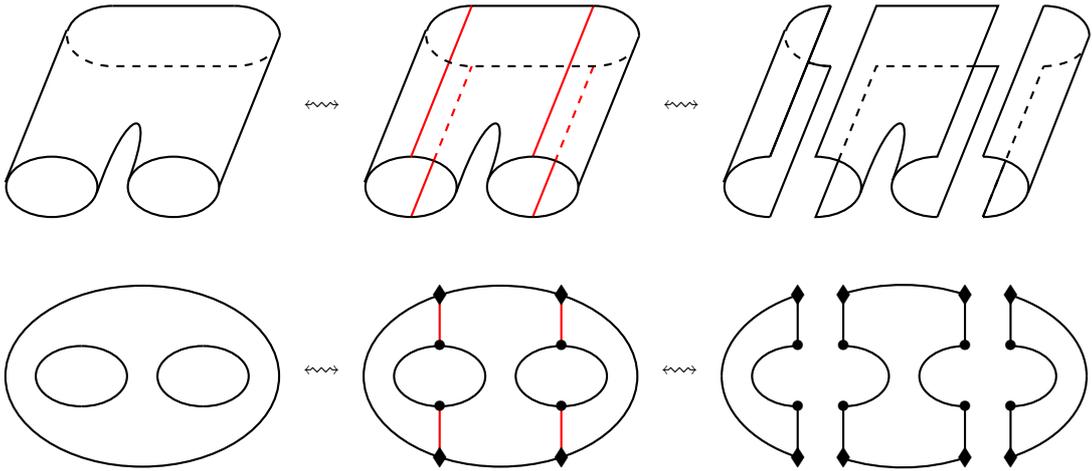

\medskip

Using that $\textsc{Map}\big((-)_B,\mathbf{Perf}\big)$ is a fully extended TFT~\cite{CHS}, we have that the lagrangian correspondence~\eqref{equation-lagrangianpant} is obtained by applying 
$\textsc{Map}\big((-)_B,\mathbf{Perf}\big)$ to the oriented cobordism given by the pair of pants, see~\cite[Theorem 4.8]{CalTFT}. 

Hence, when restricting ourselves to the substacks of amplitude zero modules of fixed dimension $n$, we get a lagrangian correspondence 
\[
[GL_n/GL_n]\times[GL_n/GL_n]\longleftarrow [(GL_n\times GL_n)/GL_n]\longrightarrow [GL_n/GL_n]
\]
that coincides with the one given by applying $\textsc{Map}\big((-)_B,BGL_n\big)$ to the pair of pants (and using~\cite[Theorem 4.8]{CalTFT}). It was shown by Safronov~\cite{Safronov} that composition with this lagrangian correspondence 
gives back the fusion procedure from~\cite{AMM}. 

\begin{remark}
Notice that $\mathbf{Perf}$ sends the conjectural fully dualizable object $k$ in $\scat{CY}_n^0$ from Remark~\ref{remark-conjecturale} to the fully dualizable object $\mathbf{Perf}_k$ in $\scat{Lag}_n^2$. 
As a consequence $\mathbf{Perf}$ shall intertwine the conjectural fully extended TFT from Remark~\ref{remark-conjecturale} with the fully extended TFT $\textsc{Map}\big((-)_B,\mathbf{Perf}_k\big)$ from~\cite{CHS} 
(see also~\cite{CalTFT} for a heuristic). What we have done above is following this guiding idea and applying it in an \textit{ad hoc} way to the case of the pair-of-pants.  
\end{remark}

\subsection{Open questions}

Before applying reduction, we have a $1$-shifted lagrangian structure on the morphism 
\[
\mathbf{Perf}_{k\overline{Q}_{loc}}\longrightarrow \mathbf{Perf}_{k[x^{\pm1}]}^{V(Q)}\,.
\]
Fixing a dimension vector $\vec{n}\in(\mathbb{Z}_{\geq0})^{V(Q)}$, one can consider the open substacks of dimension $\vec{n}$ amplitude $0$ modules. 
This leads to a $1$-shifted lagrangian structure on the morphism 
\[
\big[\mathbf{Rep}(k\overline{Q}_{loc},\vec{n})/GL_{\vec{n}}\big]\longrightarrow \big[GL_{\vec{n}}/GL_{\vec{n}}\big]
\]
(note that $\mathbf{Rep}(k\overline{Q}_{loc},\vec{n})\simeq \mathbf{DRep}(k\overline{Q}_{loc},\vec{n})$). 
Knowing from~\S\ref{subsection-comparison} that the $1$-shifted symplectic structure on the target is the standard one on the adjoint quotient, we obtain that $\mathbf{Rep}(k\overline{Q}_{loc},\vec{n})$ is a quasi-hamiltonian $GL_{\vec{n}}$-space. 
We conjecture that it coincides with the quasi-hamiltonian structure on the very same space from~\cite{VdB,VdB2,Yam,Boalch}. 

Observe that it suffices to prove the conjecture for the simplest case $Q=A_2$. Indeed, in the above references the general case is obtained from the $A_2$ one by the fusion process of~\cite{AMM}. 
We proceeded in the same way in~\S\ref{subsection-fusion}, and it follows from~\S\ref{subsec-fusionpants} that our fusion procedure coincides with the one of~\cite{AMM} for substacks of amplitude $0$ modules. 

\medskip

In order to prove the conjecture for $A_2$, one could try to prove a similar statement directly at the noncommutative level. To achieve this, one would first have to rigorously prove that Van den Bergh's noncommutative quasi-hamiltonian 
structures~\cite{VdB2} naturally lead to relative Calabi--Yau structures (as was argued in the introduction). 

\medskip

Finally, we believe that our $1$-Calabi--Yau structure on the dg-version of the multiplicative preprojective algebra shall give back the non-degenerate pairing appearing in~\cite{BK} (in the case of a nodal curve with rational components) on the one hand, 
and could probably be recovered from~\cite{ST} on the other hand. 
In both cases, it is very likely that the proof of the comparison will again go through a reduction to the $A_2$ quiver, using fusion.


\begin{thebibliography}{XXX}

\bibitem{AMM}A.~Alekseev, A.~Malkin \& E.~Meinrenken, 
Lie group valued moment maps, 
J. Differential Geom. \textbf{48} (1998), no. 3, 445--495.

\bibitem{Beil}A. A.~Beilinson,
How to glue perverse sheaves, 
K-theory, arithmetic and geometry, 
Lecture Notes in Math., \textbf{1289}, Springer, Berlin, 1987, 42--51.

\bibitem{BK}R.~Bezrukavnikov \& M.~Kapranov, 
Microlocal sheaves and quiver varieties, 
Annales de la Facult\'e des Sciences de Toulouse \textbf{25} (2016),  no. 2--3, 473--516. 

\bibitem{BoalchAnnals}P.~Boalch, 
Geometry and braiding of Stokes data; Fission and wild character varieties, 
Annals of Mathematics \textbf{179} (2014), no. 1, 301--365. 

\bibitem{Boalch}P.~Boalch, 
Global Weyl groups and a new theory of multiplicative quiver varieties, 
Geom. Topol. \textbf{19} (2015), no. 6, 3467--3536. 

\bibitem{BCS}T.~Bozec, D.~Calaque \& S.~Scherotzke, 
Relative critical loci and quiver moduli, 
preprint available at \url{https://arxiv.org/abs/2006.01069}, 
to appear in Annales Scientifiques de l'ENS. 

\bibitem{BD1}C.~Brav \& T.~Dyckerhoff, 
Relative Calabi--Yau structures, 
Compositio Math. \textbf{155} (2019), 372--412. 

\bibitem{BD2}C.~Brav \& T.~Dyckerhoff, 
Relative Calabi--Yau structures II: Shifted Lagrangians in the moduli of objects, 
Selecta Math. (N.S.) \textbf{27} (2021), no. 4, Paper No. 63. 

\bibitem{CalTFT}D.~Calaque, 
Lagrangian structures on mapping stacks and semi-classical TFTs, 
Contemporary Mathematics \textbf{643} (2015). 

\bibitem{CPTVV}D.~Calaque, T.~Pantev, B.~To\"en, M.~Vaqui\'e \& G.~Vezzosi, 
Shifted Poisson structures and deformation quantization, 
Journal of Topology \textbf{10} (2017), no. 2, 483--584. 

\bibitem{CHS}D.~Calaque, R.~Haugseng \& C.~Scheimbauer, 
On the AKSZ construction in derived algebraic geometry as an extended topological field theory, 
preprint available at \url{https://arxiv.org/abs/2108.02473}, 
to appear in Memoirs of the AMS. 

\bibitem{ChaFai}O.~Chalykh \& M.~Fairon,
Multiplicative quiver varieties and generalised Ruijsenaars-Schneider models,
J. Geom. Phys. \textbf{121}, (2017), 413--437.

%
\bibitem{CBS}W.~Crawley-Boevey \& P.~Shaw, 
Multiplicative preprojective algebras, middle convolution and the Deligne--Simpson problem, 
Adv. Math. \textbf{201} (2006), 180--208. 

\bibitem{FH}D.~Fern\'andez \& E.~Herscovich, 
Double quasi-Poisson algebras are pre-Calabi--Yau, 
Int. Math. Res. Not. IMRN \textbf{2022} (2022), no. 23, 18291--18345. 

\bibitem{GePo}I. M.~Gel'fand \& V. A.~Ponomarev, 
Model algebras and representations of graphs,
Funktsional. Anal. i Prilozhen. \textbf{13} no. 3, (1979), 1--12. 

\bibitem{IKV}N.~Iyudu, M.~Kontsevich \& Y.~Vlassopoulos, 
Pre-Calabi-Yau algebras as noncommutative Poisson structures, 
Journal of Algebra \textbf{567} (2021), no.~1, 63--90. 

\bibitem{KaSc}D.~Kaplan \& T.~Schedler, 
Multiplicative preprojective algebras are $2$-Calabi--Yau, 
Algebra \& Number Theory \textbf{17} (2023), no. 4, 831--883. 

\bibitem{KellerDG}B.~Keller, 
On differential graded categories,
Proc. ICM Vol. II, 151--190, Eur. Math. Soc., Z\"urich, 2006. 

\bibitem{Lusz}G.~Lusztig, G,
Quivers, perverse sheaves, and quantized enveloping algebras,
J. Amer. Math. Soc. \textbf{4} no. 2, (1991), 365--421. 

\bibitem{MS2}V.~Melani \& P.~Safronov, 
Derived coisotropic structures II: stacks and quantization, 
Selecta Math. \textbf{24} (2018), 3119--3173. 

\bibitem{Nak}H.~Nakajima, 
Instantons on ALE spaces, quiver varieties, and Kac-Moody algebras,
Duke Math. J. \textbf{76} no. 2, (1994), 365--416. 

\bibitem{PTVV}T.~Pantev, B.~To\"en, M.~Vaqui\'e \& G.~Vezzosi, 
Shifted symplectic structures, 
Publications math\'ematiques de l'IH\'ES \textbf{117} (2013), no. 1, 271--328. 

\bibitem{Prid}J. P.~Pridham, 
Shifted Poisson and symplectic structures on derived $N$-stacks, 
Joural of Topology \textbf{10} (2017), no. 1, 178--210. 

\bibitem{Ring}C. M.~Ringel, 
The preprojective algebra of a quiver, 
CMS Conf. Proc., \textbf{24}, Amer. Math. Soc., Providence, RI (1998), 467--480. 

\bibitem{Safronov}P.~Safronov, 
Quasi-Hamiltonian reduction via classical Chern-Simons theory, 
Advances in Mathematics \textbf{287} (2016), 733--773. 

\bibitem{ScTi}T.~Schedler \& A.~Tirelli, 
Symplectic resolutions for multiplicative quiver varieties and character varieties for punctured surfaces, 
in \textit{Representation theory and algebraic geometry -- a conference celebrating the birthdays of Sasha Beilinson and Victor Ginzburg}, pp. 393--459,
Trends Math., Birkhäuser/Springer, Cham, 2022. 

\bibitem{Schofield}A. H. Schofield, 
Representations of rings over skew fields, 
London Math. Soc. Lec. Note Ser. \textbf{92}, Cambridge Univ. Press, 1985.

\bibitem{ST}V.~Shende \& A.~Takeda, 
Calabi--Yau structures on topological Fukaya categories, 
preprint available at \url{https://arxiv.org/abs/1605.02721}. 

\bibitem{TdV-VdB}L.~de Thanhoffer de V\"olcsey \& M.~Van den Bergh, 
Calabi--Yau Deformations and Negative Cyclic Homology, 
Journal of noncommutative geometry \textbf{12} (2018), no. 4, 1255--1291. 

\bibitem{ToDG2}B.~To\"en, 
Lectures on dg-categories, 
in \textit{Topics in Algebraic and Topological $K$-Theory}, 
Lecture Notes in Mathematics \textbf{2008}, Springer, Berlin, Heidelberg. 

\bibitem{ToEMS}B.~To\"en, 
Derived algebraic geometry, 
EMS Surveys in Mathematical Sciences \textbf{1} (2014), no. 2, 153--240. 

\bibitem{ToCY}B.~To\"en, 
Structures symplectiques et de Poisson sur les champs en cat\'egories, 
preprint available at \url{https://arxiv.org/abs/1804.10444}. 

\bibitem{ToVa}B.~To\"en \& M.~Vaqui\'e, 
Moduli of objects in dg-categories, 
Annales de l'ENS \textbf{40} (2007), 387--444. 

\bibitem{VdB}M.~Van den Bergh,
Double Poisson algebras,
Trans. Amer. Math. Soc. \textbf{360} (2008), no. 11, 5711--5769.

\bibitem{VdB2}M.~Van den Bergh, 
Non-commutative quasi-Hamiltonian spaces, 
in \textit{Poisson geometry in mathematics and physics}, Contemp. Math. \textbf{450} (2008), 273--300.

\bibitem{Yam}D.~Yamakawa, 
Geometry of multiplicative preprojective algebra, 
Int. Math. Res. Pap. \textbf{2008} (2008). 

\bibitem{Yeung}W.-K.~Yeung, 
Relative Calabi--Yau completions, 
preprint available at \url{https://arxiv.org/abs/1612.06352v1}. 

\end{thebibliography}
\end{document}